\newtheorem{Theorem}{Theorem}[section]
\newtheorem{theorem}[Theorem]{Theorem}
\newtheorem{proposition}[Theorem]{Proposition}
\newtheorem{lemma}[Theorem]{Lemma}
\newtheorem{fact}[Theorem]{Fact}
\newtheorem{claim}[Theorem]{Claim}
\theoremstyle{definition}
\newtheorem{example}[Theorem]{Example}
\newtheorem{Remark}[Theorem]{Remark}
\newtheorem{remark}[Theorem]{Remark}
\newtheorem{remark/def}[Theorem]{Remark/Definition}
\newtheorem{definition}[Theorem]{Definition}
\newsavebox{\indbin}
\savebox{\indbin}{\begin{picture}(0,0)
\newlength{\gnu}
\settowidth{\gnu}{$\smile$} \setlength{\unitlength}{.5\gnu}
\put(-1,-.65){$\smile$} \put(-.25,.1){$|$}
\end{picture}}
\def \indo {\mathop{\smile \hskip -0.9em ^| \ }}
\newcommand{\be}{\begin{enumerate}}
\newcommand{\bd}{\begin{defn}}
\newcommand{\bt}{\begin{theorem}}
\newcommand{\bl}{\begin{lemma}}
\newcommand{\ee}{\end{enumerate}}
\newcommand{\ed}{\end{defn}}
\newcommand{\et}{\end{theorem}}
\newcommand{\el}{\end{lemma}}
\newcommand{\la}{\langle}
\newcommand{\ra}{\rangle}
\newcommand{\ov}{\overline}
\newcommand{\cd}{\centerdot}
\newcommand{\CF}{{\mathcal F}}
\newcommand{\CG}{{\mathcal G}}
\newcommand{\CM}{{\mathcal M}}
\newcommand{\CS}{{\mathcal S}}
\newcommand{\id}{\operatorname{id}}
\newcommand{\Aut}{\operatorname{Aut}}
\newcommand{\aut}{\operatorname{Aut}}
\def\eq{\operatorname{eq}}
\def\dcl{\operatorname{dcl}}
\def\acl{\operatorname{acl}}
\def\tp{\operatorname{tp}}
\def\ul{\underline}
\def\ob{\operatorname{Ob}}
\def\mor{\operatorname{Mor}}
\def\init{\operatorname{init}}
\def\ter{\operatorname{ter}}
\def\bsigma{\mbox{\boldmath $\sigma$}}
\def\btau{\mbox{\boldmath $\tau$}}
\def\bmu{\mbox{\boldmath $\mu$}}
\title[Non-commutative groupoids]{Non-commutative groupoids obtained from the failure of $3$-uniqueness in stable theories}
\author{Byunghan Kim}
\author{SunYoung Kim}
\author{Junguk Lee}
\address{Department of Mathematics\\ Yonsei University\\
50 Yonsei-Ro, Seodaemun-Gu\\
Seoul 120-749, Korea}
\email{bkim@yonsei.ac.kr}
\email{sy831@yonsei.ac.kr}
\email{ljw@yonsei.ac.kr}
\thanks{All authors were supported by Samsung Science Technology Foundation under Project Number SSTF-
BA1301-03.}
\begin{document}

\begin{abstract}
We construct a possibly non-commutative groupoid from the failure of $3$-uniqueness of a strong
type. The commutative groupoid constructed by John Goodrick and Alexei Kolesnikov  in \cite{GK} lives in the center
 of the groupoid.

A certain automorphism group  approximated by the vertex groups of the non-commutative groupoids  is suggested as a ``fundamental group" of the strong type.
\end{abstract}

\maketitle

\section{Introduction}

%The intension of this last section is to find an analog  of the fact that homology group is an abelianization of %homotopy group.  As

In singular homology theory, one of the differences between the fundamental group and the first homology group $H_1$  is that the former is not necessarily commutative while  the latter is. In the earlier papers \cite{GK},\cite{gkk},\cite{GKK} by Goodrick,  Kolesnikov (and the first author),  an analogue of  homotopy/homology theory is developed in the context of model theory but where the ``fundamental group'' introduced is always commutative. In this paper, by taking an approach closer to the original idea of homotopy theory, we suggest how to construct a different fundamental group in a non-commutative manner. More precisely, from a symmetric witness to the failure of $3$-uniqueness in a stable theory,
 we construct a new groupoid $\CF$ whose ``vertex groups'' $\mor_{\CF}(a,a)$ need not be  abelian. In fact, we will show that $\mor_{\CG}(a,a) \leq  Z(\mor_{\CF}(a,a))$, where $\CG$ is the commutative groupoid constructed in \cite{GK} and \cite{gkk}.  We may call $\CF$ a {\em non-commutative groupoid } constructed from the symmetric witness. But unlike the groupoid $\CG$, this new groupoid $\CF$ is definable only in certain cases (e.g. under $\omega$-categoricity); in general, it is merely invariant over some  set.

%[[summary each sections??]]

\medskip

We work in  a complete \emph{stable} theory $T$ with a fixed
monster model $\CM=\CM^{\eq}$.
Unless said otherwise,  tuples are  from $\CM$ and
sets $A,B,\ldots$ are small subsets of $\CM$; and there is an independence notion among sets, defined by nonforking.
For tuples $a_0,a_1,\dots$, we write $a_{01}$ to denote $a_0a_1$ and so on. Throughout this paper {\em we also fix an algebraically closed  set $A$ and a complete type $p$ of possible infinite arity over $A$}. For a tuple $c$,
$\ov{c}$ denotes $\acl(cA)$. If $\{a,b,c\}$ is an $A$-independent set of realizations of $p$, then we let
$$\widetilde{ab}:=\dcl(\ov{ac}\ov{bc})\cap \ov{ab}.$$ Due to stationarity, this set only depends on $a$ and $b$.
 The rest
 notational convention we take is standard.
For example, $a\equiv_Bb$ means $\tp(a/B)=\tp(b/B)$; and
 $\Aut(C/B)$ is the group of
elementary maps from $C$ {\em onto} $C$ fixing $B$ pointwise.
In addition, $\Aut(\tp(f/B))$ means $\Aut(Y/B)$ where $Y$ is the solution set of
$\tp(f/B)$.

Now we recall definitions of notions which we will use throughout.
A {\em groupoid} is a category where every morphism is invertible. Hence in a groupoid,
for each object $a$, $\mor(a,a)$ forms a group called a {\em vertex group}. If all the vertex groups are abelian we
call the groupoid {\em abelian} or {\em commutative}.
We say a groupoid is {\em connected} ({\em finitary}, resp.) if
for any two objects $a,b$, $\mor(a,b)$ is non-empty  (finite, resp.).  If a groupoid is connected then
each vertex group is isomorphic.

 Originally, $3$-uniqueness is defined functorially in \cite{Hr},\cite{gkk}, but as we will not use amalgamation notion the following equivalent definition would suffice in this note.

\begin{definition} \cite{gkk} We say the fixed complete type $p$ has {\em $3$-uniqueness over $A$}
if whenever  $\{a_0,a_1,a_2\}$ is an $A$-independent set of realizations of $p$, and for $0\leq i<j\leq 2$, $\sigma_{ij}\in \aut(\ov{a_{ij}}/\ov{a_i}\ \ov{a_j})$, then $\sigma_{01}\cup \sigma_{02}\cup \sigma_{12}$ is also an elementary map.
\end{definition}

\begin{fact} \cite{Hr},\cite{gkk} Let $a,b,c\models p$ be independent over $A$.
Then $p$ has $3$-uniqueness over $A$  iff $\widetilde{ab}=\dcl(\ov{a}\ov{b})$.
%where  $\widetilde{ab}:=\dcl(\ov{ac}\ov{bc})\cap \ov{ab}$.
\end{fact}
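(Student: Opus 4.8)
Throughout, recall the notation $\ov{ab}=\acl(\ov a\,\ov b)=\acl(abA)$, $\ov a=\acl(aA)$, and $\widetilde{ab}=\dcl(\ov{ac}\,\ov{bc})\cap\ov{ab}$; the inclusion $\dcl(\ov a\,\ov b)\subseteq\widetilde{ab}$ is automatic (as $\ov a\subseteq\ov{ac}$, $\ov b\subseteq\ov{bc}$), so the content is that the reverse inclusion is equivalent to $3$-uniqueness. For the direction ``$3$-uniqueness $\Rightarrow\widetilde{ab}=\dcl(\ov a\,\ov b)$'', fix $e\in\widetilde{ab}$ and an arbitrary $\sigma\in\aut(\ov{ab}/\ov a\,\ov b)$. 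Apply $3$-uniqueness to the triple $(a_0,a_1,a_2):=(a,b,c)$ with $\sigma_{01}:=\sigma$, $\sigma_{02}:=\id_{\ov{ac}}$, $\sigma_{12}:=\id_{\ov{bc}}$: the union extends to some $g\in\Aut(\CM)$, which fixes $\ov{ac}$ and $\ov{bc}$, hence all of $\dcl(\ov{ac}\,\ov{bc})$, pointwise. Since $e\in\ov{ab}\cap\dcl(\ov{ac}\,\ov{bc})$, computing $g(e)$ both ways gives $\sigma(e)=g(e)=e$. As $\sigma$ was arbitrary and $e\in\acl(\ov a\,\ov b)$, the Galois correspondence for $\acl$ forces $e\in\dcl(\ov a\,\ov b)$, so $\widetilde{ab}=\dcl(\ov a\,\ov b)$.

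For the converse, assume $\widetilde{ab}=\dcl(\ov a\,\ov b)$; by stationarity this holds for every $A$-independent pair of realizations of $p$. Let $\{a_0,a_1,a_2\}$ be $A$-independent, each realizing $p$, and $\sigma_{ij}\in\aut(\ov{a_{ij}}/\ov{a_i}\,\ov{a_j})$. I would first reduce to a single non-identity map: it suffices to produce, for each edge $ij$ and each such $\sigma_{ij}$, an automorphism $\hat\sigma_{ij}\in\Aut(\CM)$ extending $\sigma_{ij}$ and fixing the other two sets $\ov{a_{kl}}$ pointwise. Indeed $\hat\sigma_{01}\circ\hat\sigma_{02}\circ\hat\sigma_{12}$ then restricts, on each edge, to two identity factors and the one prescribed permutation, hence extends $\sigma_{01}\cup\sigma_{02}\cup\sigma_{12}$. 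Since $A=\acl(A)$, any permutation of $\{a_0,a_1,a_2\}$ is induced by an automorphism over $A$, so it is enough to build $\hat\sigma_{01}$.

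The crux is the following lemma, to be proved directly in $\CM=\CM^{\eq}$: \emph{if $Z\subseteq Y$ are small sets with $\acl(Z)\cap\dcl(Y)=Z$, then the restriction map $\Aut(\CM/Y)\to\aut(\acl(Z)/Z)$ is onto.} For the proof, take a finite imaginary $e\in\acl(Z)$; its set $\{e_1,\dots,e_k\}$ of $Z$-conjugates is a finite $Z$-definable subset of $\acl(Z)$, and the $\equiv_Y$-class $F$ of $e_1$ inside it is a finite $\Aut(\CM/Y)$-invariant set contained in $\acl(Z)$, so its canonical parameter lies in $\dcl(Y)$ and, having finite $\Aut(\CM/Z)$-orbit, also in $\acl(Z)$; thus that parameter lies in $\acl(Z)\cap\dcl(Y)=Z$, so $F$ is $\Aut(\CM/Z)$-invariant and hence equals all of $\{e_1,\dots,e_k\}$. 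Therefore every $Z$-conjugate of $e$ is a $Y$-conjugate, and applying this to the finite subtuples of an enumeration of $\acl(Z)$ shows that each $\sigma\in\aut(\acl(Z)/Z)$ extends to an automorphism fixing $Y$ pointwise.

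Finally, apply the lemma with $Z:=\dcl(\ov{a_0}\,\ov{a_1})$ and $Y:=\dcl(\ov{a_{02}}\,\ov{a_{12}})$. Then $Z\subseteq Y$ (since $\ov{a_0}\subseteq\ov{a_{02}}$, $\ov{a_1}\subseteq\ov{a_{12}}$), and $\acl(Z)=\acl(\ov{a_0}\,\ov{a_1})=\ov{a_{01}}$, while $\acl(Z)\cap\dcl(Y)=\ov{a_{01}}\cap\dcl(\ov{a_{02}}\,\ov{a_{12}})=\widetilde{a_0a_1}$, which equals $Z$ exactly by the hypothesis. As $\sigma_{01}$ fixes $\ov{a_0}\,\ov{a_1}$, hence $\dcl(\ov{a_0}\,\ov{a_1})=Z$, pointwise, it lies in $\aut(\ov{a_{01}}/Z)$, so the lemma provides $\hat\sigma_{01}\in\Aut(\CM/Y)$ extending it, and $\hat\sigma_{01}$ fixes $\ov{a_{02}}$ and $\ov{a_{12}}$ pointwise by construction, completing the reduction and hence the proof. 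I expect the lemma to be the only genuinely substantive step; everything else is bookkeeping with $\acl$, $\dcl$, and the identity $\ov{a_{ij}}=\acl(\ov{a_i}\,\ov{a_j})$.
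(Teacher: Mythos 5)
Your proof is correct. Note that the paper does not prove this Fact at all---it is quoted from \cite{Hr} and \cite{gkk}---so there is no in-paper argument to compare with; what you have written is essentially a reconstruction of the standard argument from those sources. The easy direction (taking $\sigma_{02}=\id_{\ov{ac}}$, $\sigma_{12}=\id_{\ov{bc}}$, extending the union to an automorphism of $\CM$, and then using the Galois correspondence in $\CM^{\eq}$ to pass from ``fixed by all of $\aut(\ov{ab}/\ov{a}\,\ov{b})$'' to membership in $\dcl(\ov{a}\,\ov{b})$) is exactly as expected. Your extension lemma---if $Z\subseteq Y$ and $\acl(Z)\cap\dcl(Y)=Z$, then restriction $\Aut(\CM/Y)\to\aut(\acl(Z)/Z)$ is onto, proved by coding the $\equiv_Y$-class inside the finite set of $Z$-conjugates and locating that code in $\acl(Z)\cap\dcl(Y)=Z$---is the genuinely substantive step, and it is the same finite-set-coding device that underlies the proofs in the cited references; your application of it with $Z=\dcl(\ov{a_0}\,\ov{a_1})$, $Y=\dcl(\ov{a_{02}}\,\ov{a_{12}})$ is the right one, since $\acl(Z)\cap\dcl(Y)=\widetilde{a_0a_1}$ is precisely the hypothesis. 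Two small points are worth making explicit: the union $\sigma\cup\id_Y$ in the lemma is a well-defined map because $\acl(Z)\cap Y\subseteq\acl(Z)\cap\dcl(Y)=Z$ is fixed pointwise by $\sigma$; and in the composition $\hat\sigma_{01}\circ\hat\sigma_{02}\circ\hat\sigma_{12}$ one uses that each $\sigma_{ij}$ maps $\ov{a_{ij}}$ onto itself, so the remaining factors act as the identity on the relevant images. Transferring the hypothesis $\widetilde{ab}=\dcl(\ov{a}\,\ov{b})$ to the other edges uses stationarity of $p$ over $A=\acl(A)$ (total indiscernibility of the Morley sequence), which you do note.
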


We now recall a certain automorphism group which plays a role of the (abelianized) fundamental group of $p$ in the homotopy theory
 of model theory introduced   in \cite{GK},\cite{GKK}.

\begin{definition} \cite{gkk},\cite{GKK} Let $\{a,b,c\}$ be $A$-independent set of realizations of $p$.
We let
%$$\widetilde{ab}:=\dcl(\ov{ac}\ov{bc})\cap \ov{ab},$$
%and let
$$\Gamma_2(p):=\aut(\widetilde{ab}/\ov{a}\ov{b}).$$
\end{definition}

Since the homology groups of $p$ will  not be dealt with,  we do not recall the definition of those,
but  only point out the following  proved in \cite{GKK}.

\begin{fact}\label{Hure} The group
$\Gamma_2(p)$ is profinite abelian and
 isomorphic to the type's 2nd homology group $H_2(p)$.
\end{fact}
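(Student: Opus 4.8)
The plan is to establish the three assertions in increasing order of difficulty. \emph{Profiniteness} is essentially formal: since $\widetilde{ab}\subseteq\ov{ab}=\acl(abA)$, each element of $\widetilde{ab}$ has only finitely many conjugates over $\ov a\,\ov b$. Writing $\widetilde{ab}$ as an increasing union of finite tuples $e_i$ and letting $E_i\supseteq e_i$ be the finite orbit of $e_i$ under $\aut(\ov{ab}/\ov a\,\ov b)$, restriction gives compatible surjections $\Gamma_2(p)\twoheadrightarrow\aut(E_i/\ov a\,\ov b)$ onto finite groups, and $\Gamma_2(p)$ is the inverse limit of these; so it carries a natural profinite topology. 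The remaining two assertions are handled together: first produce the isomorphism $\Gamma_2(p)\cong H_2(p)$, then read off commutativity from it.

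\emph{The Hurewicz isomorphism.} Recall from \cite{GKK} that $H_2(p)$ is the degree-$2$ homology $\Ker\partial_2/\Im\partial_3$ of the chain complex $C_*(p)$ built from independent amalgamated systems of algebraically closed tuples (over the proper subsets of finite subsets of $\BN$), with each $C_n(p)$ \emph{free abelian} and $\partial$ the alternating-sum boundary; the systems relevant to $H_2$ have three vertices, matching an $A$-independent triple $\{a,b,c\}\models p$. The heart of the argument is to attach to each $\sigma\in\Gamma_2(p)=\aut(\widetilde{ab}/\ov a\,\ov b)$ a canonical class $[z_\sigma]\in H_2(p)$. Fixing such a triple, one extends it by $3$-amalgamation to a system over the faces of a $3$-simplex; by the Fact preceding the statement ($3$-uniqueness $\Leftrightarrow\widetilde{ab}=\dcl(\ov a\,\ov b)\Leftrightarrow\Gamma_2(p)=1$), the ambiguity in such an extension is measured precisely by $\Gamma_2(p)$, so two extensions differ by the action of an element of $\Gamma_2(p)$ on $\widetilde{ab}$. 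Performing this comparison around a closed $2$-cycle's worth of such amalgams converts $\sigma$ into a $2$-cycle $z_\sigma$; one checks that changing the auxiliary amalgamation choices alters $z_\sigma$ only by a boundary, so $\sigma\mapsto[z_\sigma]$ is well defined, and that it is a homomorphism, since composition of automorphisms of the single edge $\widetilde{ab}$ corresponds to addition of the associated cycles. For surjectivity I would invoke the normalization (``shelling'') lemmas of \cite{GKK}, which reduce an arbitrary $2$-cycle to a standard form supported on the boundary of one tetrahedron, manifestly in the image; for injectivity, if $z_\sigma$ bounds then the comparison above trivializes, so $\sigma$ extends to an elementary map on a full $3$-amalgam and hence fixes $\widetilde{ab}=\dcl(\ov{ac}\,\ov{bc})\cap\ov{ab}$ pointwise, i.e. $\sigma=\id$.

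\emph{Commutativity} is then immediate: $H_2(p)=\Ker\partial_2/\Im\partial_3$ is a subquotient of the free abelian group $C_2(p)$, hence abelian, and the isomorphism just constructed transports this back to $\Gamma_2(p)$. (One could also derive commutativity from the fact that the groupoid $\CG$ of \cite{GK},\cite{gkk} has $\Gamma_2(p)$ as a vertex group and is known to be abelian, but the route through $H_2(p)$ is self-contained once the Hurewicz map is in place.)

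The main obstacle is the construction and analysis of the Hurewicz map — translating between automorphisms of the bounded set $\widetilde{ab}$ and homology classes, and verifying well-definedness, the homomorphism property, surjectivity and injectivity. This is where the substantive amalgamation theory of stable theories enters: $4$-amalgamation/$4$-existence and the combinatorial shelling reductions of \cite{GKK}, together with the functorial formulation of $3$-uniqueness from \cite{Hr}. By comparison, profiniteness and the inheritance of commutativity are routine.
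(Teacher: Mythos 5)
First, note what you are comparing against: the paper does not prove this statement at all. It is stated as a Fact and explicitly attributed to \cite{GKK} (``we only point out the following proved in \cite{GKK}''); the only trace of an argument inside this paper is in Section 4, where $\Gamma_2(q)$ is identified with the inverse limit of the directed system $\CS_I$ of the \emph{finite abelian} groups $G_f$ attached to generic abelian groupoids, citing \cite[Theorem 2.25]{gkk}. Measured against that, your profiniteness argument (inverse limit of the finite restrictions $\aut(E_i/\ov a\,\ov b)$) is fine and routine, but the substantive content of the Fact --- the isomorphism $\Gamma_2(p)\cong H_2(p)$ --- is not actually proved in your write-up: the construction of the Hurewicz map, its well-definedness, the homomorphism property, and surjectivity are described only as intentions, and at the key points (reduction of an arbitrary $2$-cycle to a shell, control of the ambiguity in completing amalgams) you invoke the normalization/shelling lemmas of \cite{GKK} itself. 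Deferring the core steps to the very reference that the Fact cites is not a proof; it is a restatement of the citation.

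There is also a specific logical gap in how you obtain commutativity. You propose to read abelianness of $\Gamma_2(p)$ off from the isomorphism with $H_2(p)$, but that is only legitimate once your map $\sigma\mapsto[z_\sigma]$ is known to be a group homomorphism and bijective --- exactly the part you did not verify --- and abelianness of such automorphism groups is not formal: the present paper is devoted to showing that the closely related groups $F_{ab}=\aut(Y_{ab}/\ov a)$ (and $\Pi_2$) need \emph{not} be abelian, with $\Gamma_2$ only sitting in their center. In the literature the logical order is the reverse of yours: abelianness of $\Gamma_2(p)$ is established first, via the symmetric-witness and definable abelian groupoid machinery of \cite{GK},\cite{gkk} (each $G_f$ is finite abelian and $\Gamma_2$ is their inverse limit, which also gives profiniteness), and the computation $H_2(p)\cong\Gamma_2(p)$ in \cite{GKK} is carried out with that structure available. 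So as it stands your argument either silently presupposes what it claims to deduce (if the homomorphism/well-definedness checks secretly use commutativity of the relevant automorphism groups) or leaves the hardest verification entirely open; to repair it you would need to carry out the $2$-shell analysis and the independence-of-choices arguments in detail, or simply follow the paper's route through the finite abelian groups $G_f$ and quote \cite{GKK} for the homological identification.
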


There indeed is a mismatch in numbering. The group $\Gamma_2(p)$ corresponds to the fundamental group
$\pi_1$ (or its abelianization), and so  should do $H_2(p)$ to the first homology group  in algebraic topology.
An higher dimensional version of Fact \ref{Hure} is proved in \cite{GKK1}.

Our goal in this paper is to introduce a possibly non-commutative ``fundamental group"  $\Pi_2$ of $p$,
in which $\Gamma_2(p)$ places in the center. In section 2, we give a motivational example. Namely the model of a
connected groupoid having a  given vertex group $G$.   It turns out that $\Pi_2(p)=G$ and
$\Gamma_2(p)=Z(G)$ if we take $p$ as the $1$-type of any object.

In section 3, we develop a general theory for constructing our desired non-commutative connected finitary groupoid from a symmetric witness  to the non-$3$-uniqueness of $p$.

In section 4, we show that $\Pi_2(p)$, a certain automorphism group partially approximated by
the vertex groups of non-commutative groupoids constructed from the failure of $3$-uniqueness of $p$, is a normal subgroup of $\aut(\widetilde{ab}/\ov{a})$  where $a,b\models p$ are $A$-independent, and
$\Gamma_2(p)$ is central in $\Pi_2(p)$.

\section{Finitary groupoid examples}

 Let $G$ be an arbitrary finite group.  Now let $T_G$ be the complete
stable  theory of the connected  finitary groupoid  $(O,M,.,\init,\ter)$ with
the standard setting. Namely the sorts $O, M$ represent  the {\em infinite} sets of all objects and  morphisms, respectively;  $.$ is the composition map between
morphisms; and $\init,\ter:M\to O$ are maps indicating initial and terminal objects, respectively, of a morphism. Moreover   $G_a:=\mor(a,a)$ is isomorphic to $G$  for any $a\in O$. Now due to weak elimination of imaginaries,
$\emptyset=\acl^{\eq}(\emptyset),$ and we let the $p(x)$ be the unique $1$-type over $\emptyset$ of any object.

%As pointed out in \cite{GK},

\begin{remark} \label{gpdauto} \cite[4.2]{GK} Fix $a\in O$, and for $u(\ne a)\in O$, choose $g_u\in G_u$. Then the following map $\sigma$ is a structure automorphism of the groupoid:
\be\item
$\sigma$ is the identity map on $O$, and on $G_a$;
\item
for $u(\ne a)\in O$, we have $\sigma(f)=g_u.f$ or $=f.g^{-1}_u$, if $f\in \mor(a,u)$, or $\in \mor(u,a)$, respectively;
and
\item
for $u,v(\ne a)\in O$ and $f\in \mor(u,v)$, we have  $\sigma(f)=g_v.f.g^{-1}_u.$
\ee
\end{remark}

 We fix distinct $a,b\in O$ and a morphism $f_{0}\in \mor(a,b)$.
Due again to weak elimination of imaginaries it follows that $\widetilde{ab}$, $\ov{ab}$, and $\mor(a,b)G_aG_b$ are all interdefinable, and
 $$\Gamma_2(p)=\Aut(\widetilde{ab}/\ov a,\ov
 b)=\Aut( \mor(a,b)/aG_abG_b).$$
Hence indeed
(note that
%by Remark \ref{gpdauto},
$\mor(a,b)\subseteq  \dcl(f_{0}G_a)$)
$$\Gamma_2(p) =\Aut(X/aG_abG_b).$$
where $X$ is the finite solution set
 of $\tp(f_{0}/aG_abG_b)$.

Now  for $f\in X$ there is unique $x\in G_a$ such that $f=f_0.x$,
and we claim that this $x$ must be in $Z(G_a)$.

\begin{claim}\label{centermor}
For $x\in G_a$, we have $g:=f_0.x\in X$ iff $x\in Z(G_a)$.
 \end{claim}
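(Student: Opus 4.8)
The plan is to handle the two implications separately: for the direction $x\in Z(G_a)\Rightarrow f_0.x\in X$ I will construct an explicit structure automorphism, and for the converse I will read off centrality from the action of an arbitrary elementary map on $\mor(a,b)$. Throughout I use that $.,\init,\ter$ are $\emptyset$-definable, so that every $\tau\in\Aut(\CM/aG_abG_b)$ is a structure automorphism of the groupoid fixing $a,b$ and each of $G_a,G_b$ pointwise, together with the identities $\mor(a,b)=f_0.G_a=G_b.f_0$.

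For the implication from $x\in Z(G_a)$, I would apply Remark \ref{gpdauto} with $b$ in the role of its distinguished object, taking the parameters to be $g_a:=x^{-1}\in G_a$ and $g_u:=\id$ for every other $u\ne b$. By clause (1) the resulting structure automorphism $\sigma$ fixes $O$ and $G_b$ pointwise; by clause (3) with $u=v=a$ it sends each $h\in G_a$ to $x^{-1}.h.x$, which equals $h$ exactly because $x\in Z(G_a)$, so $\sigma$ also fixes $G_a$ pointwise; and by clause (2), since $f_0\in\mor(a,b)$, we get $\sigma(f_0)=f_0.g_a^{-1}=f_0.x$. Thus $\sigma$ witnesses $f_0.x\equiv_{aG_abG_b}f_0$, i.e. $f_0.x\in X$.

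For the converse, assume $f_0.x\in X$ and fix $\tau\in\Aut(\CM/aG_abG_b)$ with $\tau(f_0)=f_0.x$. The idea is to evaluate $\tau(f_0.h)$, for an arbitrary $h\in G_a$, in two ways. Directly, $\tau(f_0.h)=\tau(f_0).\tau(h)=(f_0.x).h$, since $h$ is fixed. On the other hand, writing $f_0.h=k.f_0$ with $k:=f_0.h.f_0^{-1}\in G_b$ fixed by $\tau$, we get $\tau(f_0.h)=k.\tau(f_0)=(f_0.h.f_0^{-1}).(f_0.x)=f_0.h.x$. Comparing and cancelling $f_0$ on the left yields $x.h=h.x$; as $h\in G_a$ was arbitrary, $x\in Z(G_a)$.

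The bookkeeping with the left/right composition convention and the check that the map from Remark \ref{gpdauto} fixes $G_a$ pointwise are routine. The point that needs real care is the converse: a priori an element of $\Aut(\CM/aG_abG_b)$ may move morphisms not incident to $a$ or $b$ arbitrarily, so its behaviour on $\mor(a,b)$ has to be pinned down using only that $\mor(a,b)$ is a torsor under the left action of $G_b$ and the right action of $G_a$, generated by $f_0$. The rewriting $f_0.h=(f_0.h.f_0^{-1}).f_0$ is exactly what makes the two evaluations of $\tau(f_0.h)$ comparable, and it is the crux of the argument.
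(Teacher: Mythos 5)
Your proof is correct, and in the left-to-right direction it is essentially the paper's own computation in rearranged form: the paper evaluates the $\tau$-fixed element $f_0.y.f_0^{-1}\in G_b$ to get $f_0.y.f_0^{-1}=g.y.g^{-1}$ and cancels, while you evaluate $\tau(f_0.h)$ two ways via the rewriting $f_0.h=(f_0.h.f_0^{-1}).f_0$; these are the same idea. In the right-to-left direction you both rely on Remark \ref{gpdauto}, but you deploy it differently: you base the automorphism at $b$ and take $g_a=x^{-1}$, so the only centrality needed to fix $G_a$ pointwise is exactly the hypothesis $x\in Z(G_a)$, and $\sigma(f_0)=f_0.x$ comes out immediately. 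The paper instead bases the automorphism at $a$, which forces it first to write $f_0=z.g$ with $z\in G_b$ and to prove, by a separate computation using $x\in Z(G_a)$, that $z\in Z(G_b)$ before the Remark applies. Your choice of base object thus eliminates that intermediate step, making this direction shorter; the paper's route has the small side benefit of recording the (mildly informative) fact that the left multiplier $z$ is central in $G_b$. One point you treat implicitly but should keep in mind is that the structure automorphism produced by Remark \ref{gpdauto} is an automorphism of the (monster) model, hence extends to $\CM^{\eq}$ and witnesses the required type equality $f_0.x\equiv_{aG_abG_b}f_0$; the paper uses the same identification.
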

\begin{proof}
($\Rightarrow$) Since $g\in X$, $f_0\equiv_{G_aG_b} g$. Then for any
$y\in G_a$, we have $$f_0.y.f^{-1}_0(\in
G_b)=g.y.g^{-1}=f_0.x.y.x^{-1}.f^{-1}_0.$$
 Hence $x\in Z(G_a)$.

($\Leftarrow$) There is $z\in G_b$ such that $f_0=z.g$. Now since
$x\in Z(G_a)$, for any $y\in G_b$ we have
$$g^{-1}.y.g.x^{-1}=f_0^{-1}.z.y.z^{-1}.f_0.x^{-1}=x^{-1}.f_0^{-1}.z.y.z^{-1}.f_0=g^{-1}.z.y.z^{-1}.g.x^{-1}.$$
Hence $y=z.y.z^{-1}$, i.e. $z\in Z(G_b)$. Hence then  by Remark \ref{gpdauto} there is an automorphism fixing $aG_abG_b$
 pointwise while sending $f_0$ to $g$. Hence $g\in X$.
\end{proof}

 \begin{claim}
$\Gamma_2(p)=Z(G)$.
 \end{claim}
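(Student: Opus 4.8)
The plan is to compute $\Gamma_2(p)$ directly using the description $\Gamma_2(p)=\Aut(X/aG_aG_b)$ established just above, together with Claim~\ref{centermor}. The key observation is that Claim~\ref{centermor} gives an explicit parametrization of the solution set $X$: the map $z\mapsto f_0.z$ is a bijection from $Z(G_a)$ onto $X$. So every element of $\Aut(X/aG_aG_b)$ permutes $X$, and we want to see that this permutation group is naturally isomorphic to $Z(G_a)\cong Z(G)$.

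First I would fix $\sigma\in\Aut(X/aG_aG_b)$. Since $\sigma(f_0)\in X$, by Claim~\ref{centermor} there is a unique $z_\sigma\in Z(G_a)$ with $\sigma(f_0)=f_0.z_\sigma$. I would then check that the assignment $\sigma\mapsto z_\sigma$ is a group homomorphism $\Gamma_2(p)\to Z(G_a)$: for $\sigma,\tau$ in the group, $\sigma\tau(f_0)=\sigma(f_0.z_\tau)=\sigma(f_0).z_\tau=f_0.z_\sigma.z_\tau$ (using that $\sigma$ fixes $G_a$ pointwise, so it fixes $z_\tau\in Z(G_a)\subseteq G_a$), hence $z_{\sigma\tau}=z_\sigma z_\tau$. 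Injectivity is immediate: if $z_\sigma=e$ then $\sigma$ fixes $f_0$, and since $\mor(a,b)\subseteq\dcl(f_0G_a)$ and $\sigma$ fixes $f_0$ and $G_a$ pointwise, $\sigma$ fixes all of $X\subseteq\mor(a,b)$, so $\sigma=\id$.

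For surjectivity, given an arbitrary $z\in Z(G_a)$, I want an elementary map fixing $aG_aG_b$ pointwise and sending $f_0$ to $g:=f_0.z$. This is exactly what the ``$(\Leftarrow)$'' direction of the proof of Claim~\ref{centermor} produced: it exhibits, via Remark~\ref{gpdauto}, a structure automorphism of the groupoid fixing $aG_abG_b$ pointwise with $f_0\mapsto g$; restricting this automorphism to $X$ (equivalently, to $\mor(a,b)$) gives the desired element of $\Gamma_2(p)$ with $z_\sigma=z$. Combining, $\sigma\mapsto z_\sigma$ is an isomorphism $\Gamma_2(p)\xrightarrow{\ \sim\ }Z(G_a)\cong Z(G)$.

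I do not expect any serious obstacle here; the work was already done in Claim~\ref{centermor} and Remark~\ref{gpdauto}, and what remains is bookkeeping. The one point that needs a little care is the verification that the homomorphism is well-defined and multiplicative, specifically the step where I use that $\sigma$ fixes $z_\tau$ pointwise because $z_\tau\in Z(G_a)\subseteq G_a$ and $\sigma\restriction G_a=\id$; this is where the choice of basepoint automorphisms fixing $G_a$ pointwise (as in Remark~\ref{gpdauto}(1)) is essential. One should also note that $Z(G_a)\cong Z(G)$ follows from $G_a\cong G$, completing the identification.
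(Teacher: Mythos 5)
Your proof is correct and is essentially the paper's own argument: the paper establishes the claim by noting that $Z(G_a)$ (via Claim \ref{centermor}) and $\Aut(X/aG_abG_b)$ both act regularly on $X$ and that the two actions commute because each $\sigma\in\Aut(X/aG_abG_b)$ fixes $G_a$ pointwise, hence the two groups coincide. Your explicit map $\sigma\mapsto z_\sigma$, with its homomorphism, injectivity and surjectivity checks, is exactly the canonical isomorphism produced by that commuting-regular-actions argument, so there is no gap and no genuinely different route.
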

\begin{proof}
The proof will be similar to that of Proposition 2.22 in \cite{gkk}.
Note firstly that due to Claim \ref{centermor}, $Z(G_a)$ acts  on $X$ as an
obvious manner. This action is clearly regular. Secondly
$\Aut(X/aG_abG_b)$ also regularly  acts on $X$. Moreover since each $\sigma \in \Aut(X/aG_abG_b)$ fixes
$G_a$ pointwise,  it clearly follows  that the two actions commute. Hence
they are the same group.
\end{proof}

Due to Remark \ref{gpdauto}, we have   $f\equiv_{aG_ab} f_{0}$ for any $f\in \mor(a,b)$, i.e.,
 $\mor(a,b)$ is the solution set of
$\tp(f_0/aG_ab)$ or $\tp(f_0/\ov{a} b)$. Moreover for $f\in \mor(a,b)$, it follows $$\dcl(f\ov{a})=\dcl(f_0\ov{a})=\dcl(\mor(a,b),\ov{a})=\dcl(\mor(a,b),\ov{a}\ov{b})=\ov{ab}.$$
 %and $f$ are interdefinable over $\ov a$.
 Hence,  $$\aut(\widetilde{ab}/\ov{a})=\aut(\mor(a,b)/\ov{a})=\aut(\tp(f_0/\ov{a}b)).$$
We further claim the
following.

\begin{claim}\label{iso}
$G$ is isomorphic to $\Aut(\mor(a,b)/\ov{a})=\Aut(\mor(a,b)/G_a)$.
Hence $\Gamma_2(p)=  \Aut( \mor(a,b)/G_aG_b)=   Z(\Aut(\widetilde{ab}/\ov{a})).$
\end{claim}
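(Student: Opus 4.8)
The plan is to produce an explicit group isomorphism $\Phi\colon G\to\Aut(\mor(a,b)/\ov a)$ by exploiting Remark~\ref{gpdauto}. First I would fix the base morphism $f_0\in\mor(a,b)$ and note, as already observed in the excerpt, that $\mor(a,b)=f_0.G_a$ and that $\mor(a,b)$ is the solution set of $\tp(f_0/\ov a\,b)$; also $\dcl(f\ov a)=\ov{ab}$ for every $f\in\mor(a,b)$, so $\Aut(\mor(a,b)/\ov a)=\Aut(\widetilde{ab}/\ov a)=\Aut(\tp(f_0/\ov a\,b))$. Since $\ov a$ and $G_a$ are interdefinable over $\emptyset$ (again weak elimination of imaginaries), $\Aut(\mor(a,b)/\ov a)=\Aut(\mor(a,b)/G_a)$, which deals with the first displayed equality for free.

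For the isomorphism, given $g\in G=G_b$ (identifying $G$ with $G_b$), use Remark~\ref{gpdauto} with the roles set so that the distinguished object is $a$: the map $\sigma_g$ that fixes $O$ and $G_a$ pointwise and sends $f\in\mor(a,u)$ to $g_u.f$ is a structure automorphism; applied with $u=b$ and $g_b=g$, we get $\sigma_g\in\Aut(\mor(a,b)/\ov a)$ acting by left multiplication $f\mapsto g.f$ on $\mor(a,b)$ (and fixing everything in $\ov a$). I would then set $\Phi(g)=\sigma_g\!\restriction\!\mor(a,b)$. That $\Phi$ is a homomorphism is immediate from $\sigma_{gh}$ acting as $f\mapsto (gh).f=g.(h.f)$. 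Injectivity: if $g.f_0=f_0$ then $g=\mathrm{id}$ in $G_b$. Surjectivity is the point where one must argue: any $\tau\in\Aut(\mor(a,b)/\ov a)$ sends $f_0$ to some $f\in\mor(a,b)=G_b.f_0$, say $f=g.f_0$; I claim $\tau=\Phi(g)$. Indeed $\tau$ is determined by its value on $f_0$ because $\mor(a,b)\subseteq\dcl(f_0\ov a)$ and $\tau$ fixes $\ov a$, and $\Phi(g)$ also sends $f_0\mapsto g.f_0=f$, so the two elementary maps agree on a generating set, hence are equal. This also shows $\Aut(\mor(a,b)/\ov a)$ acts regularly on $\mor(a,b)$, mirroring $G_b$'s regular action.

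Finally, for the displayed consequence: $\Aut(\mor(a,b)/G_aG_b)$ is the subgroup of $\Aut(\mor(a,b)/\ov a)=\Aut(\mor(a,b)/G_a)$ consisting of those $\tau$ that also fix $G_b$ pointwise. Under $\Phi$, $\sigma_g$ fixes $G_b$ pointwise exactly when conjugation by $g$ is trivial on $G_b$, i.e.\ when $g\in Z(G_b)$ — this is precisely the computation already carried out in the proof of Claim~\ref{centermor} (the implication forcing $z\in Z(G_b)$). Hence $\Phi$ restricts to an isomorphism $Z(G)\cong\Aut(\mor(a,b)/G_aG_b)=\Gamma_2(p)$. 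On the other hand $Z(G)=Z(\Aut(\mor(a,b)/\ov a))$ under $\Phi$ since $\Phi$ is an isomorphism, giving $\Gamma_2(p)=Z(\Aut(\widetilde{ab}/\ov a))$, as $\widetilde{ab}$ and $\mor(a,b)$ together with $\ov a$ are interdefinable.

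I expect the main obstacle to be bookkeeping rather than conceptual: one has to be careful that Remark~\ref{gpdauto} really produces \emph{every} element of $\Aut(\mor(a,b)/\ov a)$ and not just the left-translation ones, which is why the "$\tau$ is determined by $\tau(f_0)$" observation (using $\mor(a,b)\subseteq\dcl(f_0\ov a)$) is doing the real work — and one must double-check that there is no obstruction to extending the partial map $f_0\mapsto g.f_0$ to a full automorphism, which Remark~\ref{gpdauto} supplies directly. The identification of which $\sigma_g$ fix $G_b$ pointwise is routine once Claim~\ref{centermor}'s computation is invoked.
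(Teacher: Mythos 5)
Your proposal is correct and follows essentially the same route as the paper: you send $\sigma\in G_b$ to the left-translation $f\mapsto\sigma.f$ on $\mor(a,b)$ (justified via Remark \ref{gpdauto}), check it is a bijective homomorphism onto $\Aut(\mor(a,b)/\ov{a})$, and obtain the ``hence'' part from the centrality computation of Claim \ref{centermor} together with the identification $\Gamma_2(p)=\Aut(\mor(a,b)/aG_abG_b)$. Your explicit surjectivity argument (any $\tau$ is determined by $\tau(f_0)$ since $\mor(a,b)\subseteq\dcl(f_0\ov{a})$) just spells out what the paper leaves as ``clearly 1-1 and onto.''
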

\begin{proof}
We know $G$ and $G_b$ are isomorphic. Now given
$\sigma\in G_b$ we assign  an automorphism  $\bsigma\in\Aut(\mor(a,b)/\ov{a})$ sending
 $f(\in \mor(a,b))\mapsto \sigma.f$.
This mapping is well-defined, since if $g\in \mor(a,b)$ so that $g=f.\mu$ for some $\mu\in G_a$, then
$\bsigma(g)=\bsigma(f).\mu=\sigma.f.\mu=\sigma.g$.
 %This is well-defined mapping since $\bsigma$ fixes $G_a$ pointwise.
Now this correspondence
is clearly 1-1 and onto. It is obvious that
the correspondence is a group isomorphism.
\end{proof}
In the following section we try to search this phenomenon
 in the general stable theory context.   Namely given the abelian groupoid
 built  from a  {\em symmetric witness}
  introduced in \cite{GK}, we construct an extended groupoid
  possibly non-abelian but the abelian groupoid places in the
  center of the new groupoid.  In the case of
above $T_G$, as we seen the morphism group of the abelian groupoid
is $Z(G)$, but in the extended one the morphism group is equal to
$G$.

%[[gpoid def. connected , finitary]]

\section{The non-commutative  groupoid $\CF$}

We recall the notion of symmetric witnesses introduced in \cite{GK} or \cite{GKK}.

\begin{definition}\label{full_symm_witness}
A \emph{(full) symmetric witness to non-$3$-uniqueness} (over the algebraically closed  set $A$) is a tuple $(a_0, a_1, a_2, f_{01}, f_{12}, f_{02}, \theta(x,y,z))$ such that $a_0, a_1, a_2$ and $f_{01},f_{12}, f_{02}$ are finite tuples,
$\{a_0,a_1,a_2\}$ is independent over $A$, $\theta(x,y,z)$ is a formula over $A$, and:
\begin{enumerate}
\item
$a_{ij}\subset f_{ij} \in \overline{{a}_{ij}}\smallsetminus \dcl(\overline{{a}_i}, \overline{{a}_j}) $;
%\item
% $a_{ij}$ is a subtuple of $f_{ij}$;
\item
$a_{01}f_{01} \equiv_A a_{12}f_{12} \equiv_A a_{02}f_{02}$;
\item
$f_{01}$ is the unique realization of
$\theta(x, f_{12}, f_{02})$, and so are  $f_{12},f_{02}$  of $\theta(f_{01}, y, f_{02})$, $\theta(f_{01}, f_{12}, z)$, respectively; and
\item each
$\tp(f_{ij}/\ov{{a}_i}\ \ov{{a}_j})$ is isolated by $\tp(f_{ij}/a_{ij}A)$.
\end{enumerate}
\end{definition}

The following (proved in \cite{GK}) is the key technical point saying that we have ``enough'' symmetric witnesses:

\begin{proposition}\label{full symm}
If $(a'_0, a'_1, a'_2)$ is the beginning of a Morley sequence of finite tuples over $A$ and $f'$ is a finite tuple in $\widetilde{a'_{01}}\smallsetminus \dcl(\ov{a'_0}, \ov{a'_1})$, then there is some full symmetric witness $(a_0, a_1, a_2, f, g, h, \theta)$ such that $f' \in \dcl(fA)$ and $a'_i \in \dcl(a_iA) \subseteq \ov{a'_i}$ for $i = 0, 1, 2$.

Hence if the complete type  $p$ does not have $3$-uniqueness over $A$, then there is  a  symmetric witness $(a_0,a_1,a_2,\dots)$ over $A$ such that $a_i\in \ov{c_i}$ for some $A$-independent realizations $c_0,c_1,c_2$ of $p$.
\end{proposition}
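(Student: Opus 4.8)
\medskip
\noindent\textbf{Proof plan.}

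\emph{Reduction to the main assertion.} Assume $p$ fails $3$-uniqueness over $A$ and fix a Morley sequence $(c_i)_{i<\omega}$ of $p$ over $A$ (which exists since $p$ is stationary, $A$ being algebraically closed). By the Fact following the definition of $3$-uniqueness, $\widetilde{c_{01}}\neq\dcl(\ov{c_0}\,\ov{c_1})$, and since $\dcl(\ov{c_0}\,\ov{c_1})\subseteq\widetilde{c_{01}}$ there is a single element $f'\in\widetilde{c_{01}}\smallsetminus\dcl(\ov{c_0}\,\ov{c_1})$. The statements ``$f'\in\ov{c_{01}}$'' and ``$f'\in\dcl(\ov{c_0c_2}\,\ov{c_1c_2})$'' each involve only finitely many coordinates of the $c_i$; choosing a large enough finite coordinate set $S$ and letting $a'_i:=c_i\restriction S$, the sequence $(a'_i)_{i<\omega}$ is a Morley sequence of $\tp(a'_0/A)$ over $A$, and $f'\in\widetilde{a'_{01}}\smallsetminus\dcl(\ov{a'_0}\,\ov{a'_1})$ (the non-membership because $\ov{a'_0}\,\ov{a'_1}\subseteq\ov{c_0}\,\ov{c_1}$). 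Applying the main assertion to $(a'_0,a'_1,a'_2,f')$ yields a full symmetric witness $(a_0,a_1,a_2,\dots)$ with $a_i\in\dcl(a'_iA)\subseteq\ov{a'_i}\subseteq\ov{c_i}$, and the $c_i$ are the required $A$-independent realizations of $p$.

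\emph{Symmetrizing $f'$.} Now let $(a'_0,a'_1,a'_2)$ and $f'$ be as in the main assertion. Replacing $f'$ by $a'_{01}f'$ (and still calling it $f'$, noting the old tuple remains a subtuple) we may assume $a'_{01}\subseteq f'$; this preserves $f'\in\widetilde{a'_{01}}\smallsetminus\dcl(\ov{a'_0}\,\ov{a'_1})$. Since $T$ is stable, the Morley sequence $(a'_0,a'_1,a'_2)$ is a totally indiscernible set over $A$, so there are $\mu,\rho\in\aut(\CM/A)$ with $\mu(a'_0)=a'_1$, $\mu(a'_1)=a'_2$ and $\rho(a'_0)=a'_0$, $\rho(a'_1)=a'_2$. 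Put $f_{01}:=f'$, $f_{12}:=\mu(f')$, $f_{02}:=\rho(f')$. Then $\mu(\ov{a'_{01}})=\ov{a'_{12}}$ and $\rho(\ov{a'_{01}})=\ov{a'_{02}}$, so $f_{12}\in\ov{a'_{12}}$, $f_{02}\in\ov{a'_{02}}$; moreover $\mu(a'_{01})=a'_{12}\subseteq f_{12}$ and $\rho(a'_{01})=a'_{02}\subseteq f_{02}$; applying $\mu,\rho$ to ``$f_{01}\in\ov{a'_{01}}\smallsetminus\dcl(\ov{a'_0}\,\ov{a'_1})$'' gives condition (1) for $f_{12},f_{02}$; and $a'_{01}f_{01}\equiv_A a'_{12}f_{12}\equiv_A a'_{02}f_{02}$, witnessed by $\mu$ and $\rho$, is condition (2). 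Below we only ever enlarge $f_{01}$ by further elements of $\ov{a'_{01}}$, always redefining $f_{12}:=\mu(f_{01})$ and $f_{02}:=\rho(f_{01})$, so (1) and (2) persist for free; only at the end are the $a'_i$ enlarged, inside $\ov{a'_i}$.

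\emph{Conditions (3) and (4).} For (3): from $f_{01}\in\widetilde{a'_{01}}=\dcl(\ov{a'_{02}}\,\ov{a'_{12}})\cap\ov{a'_{01}}$ pick finite $e_{02}\in\ov{a'_{02}}$, $e_{12}\in\ov{a'_{12}}$ with $f_{01}\in\dcl(e_{02}e_{12}A)$; since $\rho^{-1}(e_{02}),\mu^{-1}(e_{12})\in\ov{a'_{01}}$, adjoin them to $f_{01}$, so that the updated $f_{12}=\mu(f_{01})$ now contains $e_{12}$ and $f_{02}=\rho(f_{01})$ now contains $e_{02}$. Iterating this together with its $\mu,\rho$-images, one reaches data with $f_{01}\in\dcl(f_{12}f_{02}A)$, $f_{12}\in\dcl(f_{01}f_{02}A)$ and $f_{02}\in\dcl(f_{01}f_{12}A)$; the three resulting $A$-definitions, conjoined with a clause forcing $(x,y,z)$ to realise the common $A$-type of (2), merge into one $A$-formula $\theta(x,y,z)$ of which each of $f_{01},f_{12},f_{02}$ is the unique realisation of the relevant instance --- condition (3). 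For (4): $\tp(f_{01}/\ov{a'_0}\,\ov{a'_1})$ is algebraic, hence isolated by a formula over a finite $b_0b_1\subseteq\ov{a'_0}\,\ov{a'_1}$ with $b_i$ algebraic over $a'_iA$; set $a_0:=a'_0b_0$ (and throw in, still inside $\ov{a'_0}$, whatever parameters of the analogous kind are needed for the other two pairs), $a_1:=\mu(a_0)$, $a_2:=\mu^2(a_0)$, and adjoin $a_0a_1$, $\mu^{-1}(a_1a_2)$, $\rho^{-1}(a_0a_2)$ (all of which lie in $\ov{a'_{01}}$) to $f_{01}$, re-running the closure of the previous step. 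Since $a_i\in\ov{a'_i}$ and $a'_i\in\dcl(a_iA)$ we have $\ov{a_i}=\ov{a'_i}$, so (1)--(3) survive, $a_{ij}\subseteq f_{ij}$, and $\tp(f_{ij}/a_{ij}A)$ now isolates $\tp(f_{ij}/\ov{a_i}\,\ov{a_j})$ --- condition (4). As the original $f'$ has been kept as a subtuple of $f_{01}=f$ throughout, $f'\in\dcl(fA)$.

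\emph{Main obstacle.} The crux is to verify that the closure process for (3) --- and its interaction with the enlargements forced by (4) --- terminates with all the $f_{ij}$ and $a_i$ \emph{finite}: each adjunction to $f_{01}$ may call for further parameters, so one must produce a finiteness invariant (via the finite character of forking and of algebraicity over $a'_{01}A$) forcing stabilisation, and one must perform every adjunction so that the three configurations $a'_{ij}f_{ij}$ remain $A$-conjugate and no already-secured clause of (1)--(4) is lost --- which in practice means closing under the relevant finite orbits each time an algebraic element is introduced. Everything else --- the finite-character reduction, the use of total indiscernibility to produce $\mu$ and $\rho$, and the amalgamation of the three $\theta$-definitions --- is routine. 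Stability is used throughout: stationarity (to have Morley sequences of $p$ and of $\tp(a'_0/A)$), the total indiscernibility of Morley sequences, and finite character.
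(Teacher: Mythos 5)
You present this as a proof plan and flag its ``main obstacle'' yourself; unfortunately that obstacle is precisely the mathematical content of Proposition \ref{full symm}, so the proposal as written does not constitute a proof. (Note also that the paper offers no argument to compare with: the Proposition is quoted from \cite{GK}, and the proof there does not follow your transport-and-close scheme.) Concretely, two things go wrong with the closure process for condition (3). First, there is no termination argument: after adjoining $\rho^{-1}(e_{02})$ and $\mu^{-1}(e_{12})$ to $f_{01}$ and updating $f_{12}=\mu(f_{01})$, $f_{02}=\rho(f_{01})$, the new coordinates of $f_{01}$ are not definable over the updated $f_{12}f_{02}A$, so another round is needed, and you supply no finiteness invariant; finite character of forking/algebraicity bounds each single step but gives no stabilization of the iteration. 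Second, and more seriously, the adjoined elements need not lie in $\widetilde{a'_{01}}$ at all: $\rho^{-1}(e_{02})$ lies in $\ov{a'_{01}}$, but nothing places it in $\dcl(\ov{a'_{02}},\ov{a'_{12}})$. Since condition (3) of Definition \ref{full_symm_witness} forces every coordinate of the final $f_{01}$ to lie in $\dcl(f_{12}f_{02}A)\subseteq \dcl(\ov{a_{02}},\ov{a_{12}})$, and $\ov{a_i}=\ov{a'_i}$ throughout your construction, a single such adjunction puts into $f_{01}$ an element that no symmetric witness can contain; the iteration is then not merely unfinished but blocked, because that element cannot be re-expressed from the two side algebraic closures to continue the process. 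So the strategy needs to be changed, not just completed.

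What a correct argument must do---and what the proof in \cite{GK} does, by a more delicate use of stability than stationarity and total indiscernibility---is choose the witnessing tuples inside $\ov{a'_{02}}$ and $\ov{a'_{12}}$ from the outset so that they are $A$-conjugates of $f_{01}$ over the corresponding pairs \emph{and} already stand in the mutual definability pattern required by (3), rather than producing them by pushing $f_{01}$ around with $\mu$ and $\rho$ and hoping a closure terminates. The sound parts of your plan are the routine ones: the finite-character reduction of the ``Hence'' clause to the main assertion (restricting the $c_i$ to finitely many coordinates), condition (2) via total indiscernibility of the Morley sequence, condition (4) via isolation of the algebraic type $\tp(f_{ij}/\ov{a_i}\,\ov{a_j})$ by finitely many parameters absorbed into $a_i,a_j$ (which is harmless for (3), as those parameters become subtuples of $f_{02}$, resp.\ $f_{12}$), and the merging of three $A$-definitions into one formula $\theta$. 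But the crux you defer is exactly the theorem, so the gap is genuine.
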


%We will work with  symmetric witnesses to the failure of $3$-uniqueness as defined in Definition~\ref{full_symm_witness} %above.

From now on for notational simplicity, we suppress $A$ to $\emptyset$ (by naming the set).
 %so that $\emptyset =\acl(\emptyset)$.
We fix some more notations that we will refer to throughout the rest.  Fix a symmetric witness $W=(b_0, b_1, b_2, f'_{01}, f'_{12},f'_{02},\theta)$ to the failure of $3$-uniqueness  over $\emptyset = \acl(\emptyset)$. We put  $p(x)=\tp(b_0)$. The following facts are shown in
\cite{GK},\cite{gkk} (see also \cite{GKK},\cite{GKK1}).

\begin{definition}\label{gengpoid} By    a {\em generic abelian groupoid in} $p$, we mean   an $\emptyset$-type-definable connected finitary  groupoid  $\CG$ such  that
\be\item
$\ob(\CG)=p(\CM)$,  $\mor (\CG)$ is $\emptyset$-type-definable, and
maps $$\init,\ \ter:\mor (\CG)\to\ob(\CG)$$ indicating initial and terminal objects of a morphism respectively are
$\emptyset$-definable, and so is the composition map between morphisms;

\item
 for $f\in \mor_{\CG}(b_0,b_1)$,
$$\mor_{\CG}(b_0,b_1)=\{g|\ g\equiv_{b_{01}}f\}=\{g|\ g\equiv_{\ov{b_0}\ \ov{b_1}}f\}\mbox{;\ and}$$

%and there lives the canonical group $G$ in $\acl(\emptyset)$

\item for each $a\models p$, the vertex group $\mor_{\CG}(a,a)$ is finite  abelian.
\ee
\end{definition}

\begin{fact}\label{gpoidfromwitness} From the witness $W$,  we can construct an $\emptyset$-type-definable generic abelian  groupoid $\CG$ in $p$ such  that
\be
\item
there exists $b_{01}$-definable  bijection  $$\pi_{01}:\{f'|\  f'\equiv_{b_{01}}f'_{01}\}=\{f'|\  f'\equiv_{\ov{b_{0}}\ \ov{b_{1}}}f'_{01}\}\to \mor_{\CG}(b_0,b_1),\mbox{ and}$$
%and there lives the canonical group $G$ in $\acl(\emptyset)$
\item $\theta$ represent the composition, i.e., for $f_{ij}:=\pi_{ij}(f'_{ij})\in\mor_{\CG}(b_i,b_j)$, we have
%for the morphism in  $\mor(b_i,b_j)$ corresponding to  $f'_{ij}$ by the bijection, then
$f_{12}. f_{01}=f_{02}$.
\ee
\end{fact}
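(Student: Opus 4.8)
The statement is essentially the central construction of \cite{GK} (see also \cite{gkk}), so the plan is to run that construction and indicate what must be checked. I would take $\ob(\CG):=p(\CM)$, which is $\emptyset$-type-definable. For the morphism sets, given $a,b\models p$ independent over $\emptyset$, stationarity of $p$ gives $ab\equiv b_0b_1$, so there is $\tau\in\aut(\CM)$ with $\tau(b_0b_1)=ab$, and I set
\[\mor_{\CG}(a,b):=\tau\big(\{\,f'\ :\ f'\equiv_{b_{01}}f'_{01}\,\}\big)\]
(possibly post-composed with a fixed $ab$-definable relabeling, discussed below). This does not depend on $\tau$: two such maps agree on the tuple $b_0b_1$, hence carry $\tp(f'_{01}/b_{01})$ to one and the same type over $ab$; and by clause (4) of Definition \ref{full_symm_witness}, $\tp(f'_{01}/b_{01})\vdash\tp(f'_{01}/\ov{b_0}\ \ov{b_1})$, so the two conjugacy classes coincide and $\mor_{\CG}(a,b)=\{\,g\ :\ g\equiv_{ab}f\,\}=\{\,g\ :\ g\equiv_{\ov a\ \ov b}f\,\}$ for any $f$ in it --- which is exactly clause (2) of Definition \ref{gengpoid} (note that clause (4) propagates along conjugates). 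Since $f'_{01}\in\ov{b_{01}}=\acl(b_{01})$ this set is finite, so $\CG$ will be finitary, and it is connected by construction; transporting along automorphisms, the morphism set $\mor(\CG)$, the maps $\init,\ter$, and the composition defined below are $\emptyset$-(type-)definable. Clause (1) of the Fact then records that the morphism objects over $b_0,b_1$ can be arranged in a $b_{01}$-definable bijection $\pi_{01}$ with $\{f':f'\equiv_{b_{01}}f'_{01}\}$ (equivalently with $\{f':f'\equiv_{\ov{b_0}\ \ov{b_1}}f'_{01}\}$, by the above); its role in \cite{GK} is to put the composition into $\emptyset$-definable and coherent form.

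Next comes the composition. Clause (3) of Definition \ref{full_symm_witness} says that $\theta(x,y,z)$ recovers each of $f'_{01},f'_{12},f'_{02}$ uniquely from the other two; I would use this to define, for $a,b,c\models p$ independent over $\emptyset$ and $g\in\mor_{\CG}(a,b)$, $h\in\mor_{\CG}(b,c)$, the composite $h.g$ to be the unique $k\in\mor_{\CG}(a,c)$ with $\theta(g,h,k)$ --- transporting from the base triple $b_0b_1b_2$ and using clause (2) of the witness together with stationarity to see this is well-defined and total. Associativity on independent quadruples of arguments, the identities $\id_a\in\mor_{\CG}(a,a)$, and the inverses then follow from the symmetry built into the witness (clauses (2),(3) of Definition \ref{full_symm_witness}) together with the usual bookkeeping with stationarity and the symmetry and transitivity of nonforking; the morphism sets between arbitrary (not necessarily independent) $a,b\models p$ and the full composition are then obtained from this generic data by the standard procedure for connected groupoids (e.g.\ defining $\mor_{\CG}(a,a)$ through $f_2^{-1}.f_1$ for $f_1,f_2\in\mor_{\CG}(a,b)$ with $b$ independent from $a$, and checking independence of the choice of $b$), the groupoid axioms reducing via stationarity to the generic case. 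Clause (2) of the Fact, $f_{12}.f_{01}=f_{02}$ for $f_{ij}=\pi_{ij}(f'_{ij})$, then holds by the very definition of the composition.

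The step I expect to be the main obstacle is clause (3) of Definition \ref{gengpoid}: that each vertex group $\mor_{\CG}(a,a)$ is abelian. The groupoid $T_G$ of Section 2 shows that finitary groupoids in a type need not have abelian vertex groups, so this is not a formal consequence of the axioms. Here I would follow \cite{GK}: fixing $b$ independent from $a$, realize elements of $\mor_{\CG}(a,a)$ as $f_2^{-1}.f_1$ with $f_1,f_2\in\mor_{\CG}(a,b)$, and exploit that $\mor_{\CG}(a,b)$ is, up to the actions of the vertex groups, coordinatized over each of $\ov a,\ov b$ separately, so that the commuting regular actions of $\mor_{\CG}(a,a)$ and $\mor_{\CG}(b,b)$ on it, combined with $\ov a\indo\ov b$, collapse any commutator; alternatively one identifies $\mor_{\CG}(a,a)$ up to isomorphism with an automorphism group of the kind in Fact \ref{Hure}, which is profinite abelian. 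I expect this abelianness verification to be the only point requiring more than transport of structure and routine stability bookkeeping.
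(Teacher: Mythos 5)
The paper itself gives no proof of Fact \ref{gpoidfromwitness}: it is imported from \cite{GK},\cite{gkk} (``The following facts are shown in \ldots''). So your overall plan --- re-run the Goodrick--Kolesnikov construction --- is the same route the paper takes, and the parts of your sketch concerning objects, the generic morphism sets obtained by transporting the solution set of $\tp(f'_{01}/b_{01})$ (with clause (4) of Definition \ref{full_symm_witness} used to identify the two solution sets and make the choice of automorphism immaterial), the composition read off from $\theta$ on independent triples, the passage to arbitrary pairs through an independent third point, and definability/finitarity by compactness are a reasonable outline of that construction.

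The genuine gap is in the one step you yourself single out, the abelianness of the vertex groups, where neither of your proposed mechanisms works. Mechanism (a) proves too much: in the groupoid $T_G$ of Section 2 with $G$ nonabelian, $G_b$ and $G_a$ act regularly on $\mor(a,b)$ by left and right composition, these actions commute, and $a,b$ are independent; so ``commuting regular actions of the two vertex groups plus $\ov a\indo \ov b$'' cannot collapse commutators. What the argument in \cite{GK} actually leans on is clauses (2)--(4) of Definition \ref{full_symm_witness}, above all the isolation clause (4), which forces $\mor_{\CG}(b_0,b_1)$ to be the \emph{full} solution set over $\ov{b_0}\,\ov{b_1}$ (clause (2) of Definition \ref{gengpoid}); this is exactly what fails in nonabelian $T_G$, where a single morphism together with its endpoints is not a symmetric witness because, by Claim \ref{centermor}, the solution set over $\ov{a}\,\ov{b}$ shrinks to a $Z(G_a)$-coset, so (4) fails --- and, consistently, the binding group one then gets after fattening the tuple is the abelian group $Z(G)$. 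Mechanism (b) is circular in the intended order of deduction: in \cite{gkk},\cite{GKK} (and in Section 4 of this paper) the profinite abelianness of $\Gamma_2$ recorded in Fact \ref{Hure} is itself obtained as an inverse limit of the binding groups of groupoids built from symmetric witnesses, i.e.\ it presupposes the abelianness you are trying to establish. So for the abelianness clause you need the actual argument of \cite{GK} (or an independent proof); neither of the two routes you sketch supplies it.
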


\begin{Remark}\label{bdgp}
We also fix such $\CG$,  the {\em  groupoid obtained from the witness} $W$. For $a, b\models p$, for convenience, $X_{ab}$ denotes $\mor_{\CG}(a,b)$, and $X_a=X_{aa}$ denotes the
vertex group $\mor_{\CG}(a,a)$. Hence if $a,b$ are independent then $X_{ab}$ is the solution set of
$\tp(f/ ab)$ (so of $\tp(f/\ov a\ov b)$) where $f\in X_{ab}$.

As is known,
$\eta:X_a\to X_b$ sending $\sigma$ to $f\circ \sigma\circ f^{-1}$ for some (any) $f\in X_{ab}$ is a canonical group isomorphism. Hence there lives a finite abelian group $G=(\bigcup\{X_a \mid  a\in \ob(\CG)\})/\sim$ (where $\sigma\sim \sigma'$ if $\eta (\sigma ) = \sigma'$) in $\acl(\emptyset)$, which is  canonically isomorphic to each group $X_a$.
We call $G$ the \emph{binding group of the groupoid $\CG$}. (Hence in the example in section 2, if the finite group $G$ there is abelian, then it lives in $\acl(\emptyset)$ as the binding group, so if we name $\acl(\emptyset)$, then for any $a\in O$,
$\dcl(a)=\acl(a)=\acl(aG_a)$. This need not hold if $G$ there is not abelian.)

The group $(G,.)$ naturally acts on the set $\mor(\CG)$. For $f\in X_{ab}$ and $\bsigma\in G$, the ($\emptyset$-definable) left action $\bsigma.f $ is given by the composition $\sigma. f$, where $\sigma$ is the unique element in $\bsigma\cap X_b$; the right action $f.\bsigma$ is given similarly. But the two actions are equal. Namely, for all $f\in X_{ab}$, $g\in X_{bc}$, for all $\bsigma,\btau\in G$ we have
$\bsigma. f=f. \bsigma$;
$(g.f). \bsigma = g. (f.\bsigma)$; and
$f. (\bsigma. \btau)=(f.\bsigma).\btau$. Clearly this action on $X_{ab}$ is regular, and $|G|=|X_{ab}|$.
\end{Remark}

We fix more notations. Given independent $a,b\models p$ and $f_{ab}\in X_{ab}$ (so
 $b_{01}f_{01}\equiv abf_{ab}$), we write $G_{ab}$ to denote $\Aut(\tp(f_{ab}/\ov a\ov b))=\Aut(\tp(f_{ab}/a b))$.
 Note that $G_{ab}=\Aut(\tp(f'_{ab}/\ov{a} \ov{b}))=\Aut(\tp(f'_{ab}/a b))$ too, where $b_{01}f'_{01}\equiv abf'_{ab}$, since $\dcl(f_{ab})=\dcl(f'_{ab})\ni a,b$.

\begin{fact}\label{bdgpiso}
 Let  $a,b \models p$ be independent.
 \be
\item If we let  $(f_0,g_0)\sim(f_1,g_1)\in X_{ab}^2$ when there is $\bsigma\in G$ such that $g_j=\bsigma. f_j$ ($j=0,1$), then  $\sim$ is an equivalence relation on  $X^2_{ab}$ and
 the map  $[\ ]:X_{ab}^2/\sim\to G$ sending $[(f_j,g_j)]$ to $\bsigma$ is the unique bijection such that
for
$f,g,h\in X_{ab}$,
$$[(f,g)].[(g,h)]=[(f,h)].$$

\item
For any $f,g\in X_{ab}$ and $\bsigma\in G$, we have $\dcl(f)=\dcl(g)$  and $\tp(f,\bsigma.f)=\tp(g,\bsigma. g)$.

\item There exists the canonical isomorphism  $\rho_{ab}:G\to G_{ab}$  sending $\bmu\in G$
to $\mu\in G_{ab}$ such that $\bmu. f= \mu(f)$ for some (any) $f\in X_{ab}$.
In other words $G$ again uniformly and canonically binds all the groups of the form $G_{ab}$ with independent $a,b\models p$.

\ee
\end{fact}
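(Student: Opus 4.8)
The plan is to deduce the whole statement formally from the two structural features of $\CG$ already recorded in Remark \ref{bdgp}: that the binding group $G$ sits inside $\acl(\emptyset)=\dcl(\emptyset)$ (recall $A$ has been named), and that the associated left (equivalently right) action of $G$ on $\mor(\CG)$ is $\emptyset$-definable, restricts to a \emph{regular} action on each $X_{ab}$, and satisfies $\bsigma.f=f.\bsigma$, $(g.f).\bsigma=g.(f.\bsigma)$ and $f.(\bsigma.\btau)=(f.\bsigma).\btau$. Combining the first and third of these gives $\btau.(\bsigma.f)=(\bsigma\btau).f$, so, $G$ being a finite abelian group, the left action is a genuine regular action of $G$ on the finite set $X_{ab}$; this, plus regularity, already yields part (1).

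For (1): reflexivity of $\sim$ is witnessed by the unique $\bsigma$ with $g_0=\bsigma.f_0$, symmetry by that same $\bsigma$, and transitivity because $g_1=\bsigma.f_1=\btau.f_1$ forces $\bsigma=\btau$ by regularity. Regularity also makes $[(f,g)]\mapsto\bsigma$ well defined and shows $[\ ]$ is a bijection (for surjectivity, given $\bsigma$ pick $f\in X_{ab}$ and put $g:=\bsigma.f$). The cocycle identity is the one-line computation $h=\btau.(\bsigma.f)=(\bsigma\btau).f$, whence $[(f,h)]=[(f,g)].[(g,h)]$; and $[\ ]$ is manifestly the only bijection $X_{ab}^2/{\sim}\to G$ that sends each class $[(f,\bsigma.f)]$ to $\bsigma$, which is what the uniqueness clause records.

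For (2): since $G\subseteq\dcl(\emptyset)$ and $x\mapsto\bsigma.x$ is $\emptyset$-definable, the full orbit $X_{ab}=G.f$ lies in $\dcl(f)$ for every $f\in X_{ab}$; as $g\in X_{ab}$ this gives $g\in\dcl(f)$, and symmetrically, so $\dcl(f)=\dcl(g)$. Because $a$ and $b$ are independent, $X_{ab}$ is exactly the solution set of $\tp(f/\ov a\ov b)$, so $f\equiv_{\emptyset}g$; applying the $\emptyset$-definable map $x\mapsto\bsigma.x$ (here $\bsigma\in\dcl(\emptyset)$) then gives $\tp(f,\bsigma.f)=\tp(g,\bsigma.g)$.

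For (3): put $\rho_{ab}(\bmu):=(f\mapsto\bmu.f)\in\Sym(X_{ab})$; this is a bijection by regularity, $\rho_{ab}$ is a homomorphism by the action identities (the composition order being irrelevant as $G$ is abelian), and injective by regularity. The step I expect to be the main obstacle is that $\rho_{ab}$ is \emph{onto} $G_{ab}$, i.e.\ that the abstract $G$-action realizes precisely the automorphisms of $X_{ab}$ over $\ov a\ov b$. For ``$\rho_{ab}(\bmu)\in G_{ab}$'': fix $f_0\in X_{ab}$; as $X_{ab}$ is the solution set of $\tp(f_0/\ov a\ov b)$ there is $\alpha\in\Aut(\CM/\ov a\ov b)$ with $\alpha(f_0)=\bmu.f_0$, and for any $g=\btau.f_0\in X_{ab}$ one computes $\alpha(g)=\btau.\alpha(f_0)=\btau.(\bmu.f_0)=\bmu.(\btau.f_0)=\bmu.g$, using that $\alpha$ fixes both the $\emptyset$-definable action and the elements of $\dcl(\emptyset)\supseteq G$ and that $G$ is abelian; so $\alpha$ induces $\rho_{ab}(\bmu)$. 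Conversely, given $\alpha\in G_{ab}$, let $\bmu\in G$ be the unique element with $\bmu.f_0=\alpha(f_0)$; then $\alpha$ and $\rho_{ab}(\bmu)$ agree at $f_0$ and, by the very same computation (both fix the $\emptyset$-definable data), on all of $X_{ab}=G.f_0$, so $\alpha=\rho_{ab}(\bmu)$. Hence $\rho_{ab}\colon G\to G_{ab}$ is an isomorphism; independence of the chosen $f$ in ``$\bmu.f=\mu(f)$'' is part (2) together with the action identities, and uniformity in $(a,b)$ is clear because the recipe mentions only the single group $G$ and its $\emptyset$-definable action. The moral --- and the thing easiest to get wrong --- is that an automorphism fixing $\ov a\ov b$ automatically fixes the $G$-action and the parameters in $\acl(\emptyset)$, so it is determined on $X_{ab}$ by its value at one point; that is exactly what forces $G_{ab}$ to be a regular $G$-torsor, and one must invoke the \emph{definability} of the action, not merely its group-theoretic axioms, to see it.
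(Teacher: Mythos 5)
Your proposal is correct and follows essentially the same route as the paper's (quite terse) proof: part (1) from regularity of the $G$-action, part (2) from $G\subseteq\acl(\emptyset)$ (which equals $\dcl(\emptyset)$ once $A$ is named) together with $\emptyset$-definability of the action, and part (3) from the fact that automorphisms fixing $\ov{a}\,\ov{b}$ commute with the $\emptyset$-definable $G$-action, with abelianity of $G$ entering exactly where the paper invokes it. Your surjectivity/well-definedness computation for $\rho_{ab}$ is just a slightly reorganized version of the paper's argument that $\bmu.f=\mu(f)$ for one $f$ implies it for all $g\in X_{ab}$, so there is nothing genuinely different to compare.
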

\begin{proof}
(1) This easily follows from the regularity of the action of $G$ on $X_{ab}$.

(2) comes from that $G\subseteq \acl(\emptyset)$.

(3) Here the abelianity of $G$ is used. It needs to show that $\bmu. f= \mu(f)$ implies
 $\bmu. g= \mu(g)$, for  any $f,g\in X_{ab}$ and $\bmu \in G$, $\mu \in G_{ab}$. Now there is
 $\sigma \in G_{ab}$ such that $g=\sigma (f)$. Then from (2), or directly,  $\sigma(\bmu.f)=\bmu.\sigma(f)=\bmu.g$.  Thus $\mu(g)=\mu\circ \sigma(f)=\sigma\circ \mu(f)=\sigma(\bmu.f)=\bmu.g.$ The rest can easily be checked.
\end{proof}

Now we are ready to extend the construction method given in \cite{GK} to find
another groupoid $\CF$ (which in general is  not  $\emptyset$-type-definable but $\emptyset$-invariant)
from the fixed symmetric witness $W$. The class
$\ob(\CF)$ of objects will be the same as $\ob(\CG)$.  But  $\CF$ need not be
abelian as  the vertex group $\mor_{\CF}(b_i,b_i)$ will be isomorphic to  $\Aut(Y_{01}/\ov b_0)$, where $Y_{01}$ is the
{\em possibly infinite} set
$$Y_{b_{01}}=Y_{01}:=\{f\in\dcl(f_{01},\ov{b_0})|\ f\equiv_{\ov{b_0}}f_{01}
\text{ and } \dcl(f\ov{b_0})=\dcl(f_{01}\ov{b_0})\}.$$ Note that
$\dcl(f_{01},\ov{b_0})=\dcl(f_{01}b_1\ov{b_0})$ since $b_1\in\dcl(f_{01})$.
Moreover  $Y_{01}$ and  $Y'_{01}$, the set defined the same way as $Y_{01}$ but substituting $f'_{01}$ for $f_{01}$, are interdefinable. Furthermore, we shall see that $\mor_{\CG}(b_i,b_i)\leq
Z(\mor_{\CF}(b_i,b_i))$ (Claim \ref{centergp}). We will call $\CF$
a {\em non-commutative  groupoid} constructed from the symmetric witness $W$.

%[[What if replacing $\ov{b_0}$ by simply $b_0$ in defining $Y_{01}$?? but below 3.7 says both are equal?? or that is %obvious?? ]]

\begin{remark}
The set $Y_{01}$ defined above depends only on $b_0$ and $b_1$ and not on the choice of $f_{01} \in X_{b_{01}}$.
\end{remark}
\begin{proof} Due to Facts \ref{gpoidfromwitness}(2) and \ref{bdgpiso}, even if we replace $f_{01}$ by any $g \in X_{b_{01}}$, we obtain the same
$Y_{01}$.
\end{proof}

\begin{lemma}\label{uniformaction}
A set $C=\{c_i\}_i$ of realizations of $p$ with $b_0\indo C$, and
$g_i\in X_{b_0c_i}$ are given. Then for $\sigma\in X_{b_0}$,
there is an automorphism $\mu=\mu_{\sigma}$ of $\CM$ fixing each
$\ov{c_i}$ and $\ov{b_0}$ pointwise and $\mu(g_i)=g_i.\sigma$. Similarly, if  $D=\{d_i\}_i(\indo b_0)$ is a set of realizations of
$p$ and $h_i\in X_{d_ib_0}$, then there is an automorphism $\tau$
fixing $\ov{d_i}$ and $\ov{b_0}$ such that $\tau(h_i)=\sigma.h_i.$
\end{lemma}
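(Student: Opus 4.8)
The statement asserts that the binding group $G$ acts "uniformly" on families of morphisms sharing a common initial (or terminal) vertex $b_0$, in the sense that a single automorphism realizes the right-translation by a fixed $\sigma\in X_{b_0}$ on all the $g_i\in X_{b_0c_i}$ simultaneously, while fixing the relevant algebraic closures. I would model the argument on the classical "simultaneous realization" technique: show that the type of the tuple $(g_i)_i$ over $\bigcup_i\ov{c_i}\cup\ov{b_0}$ is not changed when each $g_i$ is replaced by $g_i.\sigma$, and then invoke homogeneity of the monster model $\CM$ to get the desired $\mu_\sigma$.

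First I would fix $\bsigma\in G$ the class of $\sigma$ (using that $G$ canonically binds all the vertex groups, Remark \ref{bdgp}), so that $g_i.\sigma = g_i.\bsigma$ via the $\emptyset$-definable right action, and this single group element $\bsigma$ acts coherently across all fibers. The key reduction is: since $b_0\indo C$, the family $(\ov{c_i})_i$ is independent over $\ov{b_0}$, and each $g_i\in\dcl(\ov{b_0}\ \ov{c_i})$; by Fact \ref{bdgpiso}(2), $\tp(g_i,\bsigma.g_i)=\tp(f,\bsigma.f)$ for any $f\in X_{b_0c_i}$ and in particular $g_i\equiv_{\ov{b_0}\ \ov{c_i}}g_i.\sigma$ — this last equivalence is exactly the defining property of $X_{b_0c_i}$ being the solution set of $\tp(f/\ov{b_0}\ \ov{c_i})$ together with $G\subseteq\acl(\emptyset)$. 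Then I would upgrade these individual $2$-variable equalities of types to a statement about the whole family: because the $\ov{c_i}$ are $\ov{b_0}$-independent and $g_i$ lives in $\dcl(\ov{b_0}\ \ov{c_i})$, stationarity over the algebraically closed base $\ov{b_0}$ forces $\tp\big((g_i)_i/\ov{b_0}\cup\bigcup_i\ov{c_i}\big)=\tp\big((g_i.\sigma)_i/\ov{b_0}\cup\bigcup_i\ov{c_i}\big)$; each $g_i.\sigma$ still lies in $X_{b_0c_i}$, so each coordinate is the nonforking extension to $\bigcup_j\ov{c_j}$ of the same type over $\ov{b_0}\ \ov{c_i}$, and these nonforking extensions amalgamate to a unique type. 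Applying homogeneity yields $\mu=\mu_\sigma\in\aut(\CM/\ \ov{b_0}\cup\bigcup_i\ov{c_i})$ with $\mu(g_i)=g_i.\sigma$ for all $i$, which is the assertion; since $\mu$ fixes each $\ov{c_i}$ setwise and is elementary over it, it fixes it pointwise as $\ov{c_i}$ is algebraically closed. The symmetric statement for $D=\{d_i\}_i$ and $h_i\in X_{d_ib_0}$ follows by the same argument with the roles of initial and terminal vertices exchanged, using the left action $\bsigma.h_i = \sigma.h_i$ (which by Remark \ref{bdgp} agrees with the right action on the quotient group).

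The main obstacle I anticipate is the amalgamation step: going from "each $g_i$ is interchangeable with $g_i.\sigma$ over $\ov{b_0}\ \ov{c_i}$" to "the whole family is interchangeable over $\ov{b_0}\cup\bigcup_i\ov{c_i}$ simultaneously". This is where $b_0\indo C$ is essential — it guarantees the $\ov{c_i}$ are mutually independent over the algebraically closed set $\ov{b_0}$, so that the local type-equalities, each a nonforking statement over $\ov{b_0}$, have a common nonforking extension by stationarity; without independence one could not glue them. A secondary point to handle carefully is that $\sigma$ ranges over $X_{b_0}$ (a vertex group element at $b_0$) while I want to right-multiply elements of $X_{b_0c_i}$, so I should be explicit that $f.\sigma\in X_{b_0c_i}$ for $f\in X_{b_0c_i}$, $\sigma\in X_{b_0}$ — this is just the groupoid composition $X_{b_0c_i}\circ X_{b_0}\subseteq X_{b_0c_i}$ — and that the induced map $f\mapsto f.\sigma$ on $X_{b_0c_i}$ coincides with the $\emptyset$-definable action by the class $\bsigma$, which is precisely the content of the last paragraph of Remark \ref{bdgp}.
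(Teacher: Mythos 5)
There is a genuine gap, in fact several. First, your reduction rests on two claims that are false in this setting: (i) from $b_0\indo C$ you infer that the family $(\ov{c_i})_i$ is independent over $\ov{b_0}$ --- but $C$ is an arbitrary set of realizations of $p$, and $b_0\indo C$ says nothing about the mutual relations of the $c_i$; indeed in the paper's own use of the lemma (Claim \ref{centergp}(4)) the set $C$ consists of $b_1$ and $b'=\ter(\tau(f_{01}))\in\ov{b_0b_1}$, which are certainly not independent over $\ov{b_0}$; and (ii) you assert $g_i\in\dcl(\ov{b_0}\,\ov{c_i})$, which contradicts Definition \ref{full_symm_witness}(1) ($f_{ij}\notin\dcl(\ov{a_i},\ov{a_j})$ is the whole point of non-$3$-uniqueness); note also that if it were true, then any automorphism fixing $\ov{b_0}\cup\bigcup_i\ov{c_i}$ pointwise would fix each $g_i$, so your conclusion would force $g_i.\sigma=g_i$, i.e.\ the lemma only for $\sigma=\id$. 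Second, the ``amalgamation'' step is unjustified: $\tp(g_i/\ov{b_0}\,\ov{c_i})$ is algebraic over $b_0c_i$ and hence forks badly over $\ov{b_0}$, so ``stationarity over the algebraically closed base $\ov{b_0}$'' cannot be invoked for it; and even granting that each $g_i\equiv g_i.\sigma$ over the big set coordinatewise, this does not give equality of the \emph{joint} types of $(g_i)_i$ and $(g_i.\sigma)_i$ --- a priori one only gets $\mu(g_i)=g_i.\tau_i$ with possibly different $\tau_i\in X_{b_0}$, and the simultaneity (one $\sigma$ for all $i$) is precisely what the lemma claims, so at this step you are assuming the conclusion.

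The idea you are missing is the paper's synchronization device. One picks an auxiliary $d\models p$ with $d\indo b_0C$ and a single morphism $h\in X_{b_0d}$, and factors every $g_i$ as $g_i=h_i.h$ with $h_i\in X_{dc_i}\subseteq\ov{Cd}$. A single stationarity argument (using $b_0\indo Cd$ and that $\ov{c_0}$ is algebraically closed, applied to $\tp(\ov{Cd}/\ov{c_0})$) produces one automorphism $\mu$ fixing $\ov{b_0}\cup\ov{Cd}$ pointwise with $\mu(g_0)=g_0.\sigma$. Since $\mu$ fixes every $h_i$, we get $\mu(g_i)=h_i.\mu(h)$, and writing $\mu(h)=h.\tau$ with a single $\tau\in X_{b_0}$, the case $i=0$ forces $\tau=\sigma$, whence $\mu(g_i)=h_i.h.\sigma=g_i.\sigma$ for all $i$ at once. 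This routing of all the $g_i$ through the one morphism $h$ is what converts the pointwise statement into the uniform one, and nothing in your proposal plays that role.
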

\begin{proof} Take $d\models p$ independent from $b_0C$; and take $h\in
X_{b_0d}$. For each $i$, there is  $h_i\in X_{dc_i}$ such that
$g_i=h_i.h$. Now by stationarity we have $g_{0}\equiv_{\ov{b_0},
\ov{Cd}} g_0.\sigma$ witnessed by some automorphism $\mu$ sending
$g_0$ to $g_0.\sigma$ and fixing $\ov{b_0}, \ov{Cd}$ pointwise. Then
$\mu(g_i)=\mu(h_i.h)=h_i.\mu(h)$ since $h_i\in \ov{Cd}$. Now there
is unique $\tau\in X_{b_0}$ such that $\mu(h)=h.\tau$. Thus
$\mu(g_0)=g_0.\sigma=h_0.h.\tau$. Hence $\sigma=\tau$. Similarly
there is $\tau_i\in X_{b_0}$ such that $\mu(g_i)=g_i.\tau_i$, and
then $\mu(g_i)=g_i.\tau_i=h_i.h.\sigma$. Hence $\tau_i=\sigma$, so
$\mu(g_i)=g_i.\sigma$ as desired. The second clause can be proved similarly.
\end{proof}

Now consider $F_{b_{01}}=F_{01}:=\Aut(Y_{01}/\ov{b_0})$.
%where $Y_{01}$ is defined above.

%[Define $F$ (canonically iso. to $F_{01}$  in the similar manner in/after 3.19 of poly-paper which uniformly
%(regurally) acts on any form of $Y_{ab}$.)??]

\begin{claim}\label{centergp}
\be
\item $X_{b_{01}}\subseteq Y_{01}\subseteq \ov{b_{01}}$.

\item The action of $F_{01}$ on $Y_{01} $ (obviously by $\sigma(g)$ for $\sigma\in
F_{01}$ and $g\in Y_{01}$) is regular (so $|F_{01}|=|Y_{01}|$ but
can be infinite). Hence given $\mu\in G_{01}:=G_{b_{01}}$, there is
its unique extension in $F_{01}$ (we may identify those two).  Thus
$Y_{01}$ is $b_{01}$-invariant.

\item If we let  $(f_0,g_0)\sim(f_1,g_1)\in Y_{01}^2$ when there is (unique) $\sigma\in F_{01}$ such that $g_j=\sigma(f_j)$ ($j=0,1$), then  $\sim$ is an equivalence relation on  $Y_{01}^2$ and
 the map  $[\ ]:Y_{01}^2/\sim\to F_{01}$ sending $[(f_j,g_j)]$ to $\sigma$ is the unique bijection such that
for
$f,g,h\in Y_{01}^2$,
$$[(g,h)]\circ [(f,g)]=[(f,h)].$$

\item  $G_{01}\leq Z(F_{01}).$ Hence for $f,k\in Y_{01}$ and $\sigma\in G_{01}$, we have $f,\sigma(f)\equiv_{\ov{b_0}}k,\sigma(k)$; and
for any $\mu\in F_{01}$, $b'=\mu(b_1)$ and $b_1$ are interdefinable.

\item Suppose that  $\tau\in F_{01}$ and $f,g\in X_{b_{01}}$, so for unique  $e\in
X_{b_0}$ and $\sigma\in G_{01}$, we have $g=\sigma(f)=f.e$. Then
$f,\tau(f)\equiv_{\ov{b_0}} g,\tau(g)$;
$\tau(g)=\tau(f.e)=\tau(f).e$; and
$\sigma(f,\tau(f))=(f.e,\tau(f).e)$.
\ee
\end{claim}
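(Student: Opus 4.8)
The plan is to establish the five clauses in order, each feeding into the next. For (1), the inclusion $X_{b_{01}} \subseteq Y_{01}$ follows from Fact \ref{bdgpiso}(2): for $g \in X_{b_{01}}$ we have $\dcl(g) = \dcl(f_{01}) \ni b_1$, and since $G \subseteq \acl(\emptyset)$ the group $X_{b_0}$ acts so that $g \equiv_{\ov{b_0}} f_{01}$ and $\dcl(g\ov{b_0}) = \dcl(f_{01}\ov{b_0})$; the inclusion $Y_{01} \subseteq \ov{b_{01}}$ is immediate from $Y_{01} \subseteq \dcl(f_{01}\ov{b_0}) \subseteq \ov{b_{01}}$ since $f_{01} \in \ov{b_{01}}$. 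For (2), transitivity of the action is exactly stationarity applied to $\tp(f/\ov{b_0})$: any two elements of $Y_{01}$ have the same type over $\ov{b_0}$, and an automorphism of $\CM$ fixing $\ov{b_0}$ and carrying one to the other restricts to a permutation of $Y_{01}$ — here I must check the restriction is well-defined, i.e.\ that such an automorphism preserves $Y_{01}$ setwise, which holds because $Y_{01}$ is $\ov{b_0}$-invariant by its definition via $\tp(-/\ov{b_0})$ and the $\dcl$-condition, both of which are automorphism-invariant. Freeness of the action is clear since $f \in \dcl(f\ov{b_0})$. Regularity then identifies $G_{01}$, which acts on the subset $X_{b_{01}} \subseteq Y_{01}$, with a subgroup of $F_{01}$ via unique extension of each $\mu \in G_{01}$ to the orbit $Y_{01}$; the $b_{01}$-invariance of $Y_{01}$ follows from Fact \ref{gpoidfromwitness}(2) together with the Remark preceding the lemma. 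Clause (3) is a purely group-theoretic consequence of regularity, proved exactly as Fact \ref{bdgpiso}(1): define $[(f,g)]$ to be the unique element of $F_{01}$ sending $f$ to $g$, check it is a well-defined bijection on $\sim$-classes, and verify the cocycle identity.

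The heart of the matter is clause (4): $G_{01} \leq Z(F_{01})$. The strategy is to apply Lemma \ref{uniformaction}. Take $\sigma \in G_{01}$ and $\tau \in F_{01}$; I want $\sigma\tau = \tau\sigma$. Since $\sigma \in G_{01}$ extends uniquely to $F_{01}$ via the regular action, and $\sigma$ already is an automorphism of $\CM$ fixing $\ov{b_0}\,\ov{b_1}$ pointwise (being in $G_{01} = \Aut(\tp(f_{01}/\ov{b_0}\ov{b_1}))$), I can realize $\sigma$ by a genuine automorphism. The key observation is that $\sigma$, as an element of $G_{01}$, is induced by the binding-group action: there is $\bsigma \in G$ with $\sigma(g) = g.\bsigma$ for all $g \in X_{b_{01}}$ (Fact \ref{bdgpiso}(3)), and because $G \subseteq \acl(\emptyset)$ this right-translation commutes with everything in sight. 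Concretely, pick any $f \in X_{b_{01}} \subseteq Y_{01}$; then $\tau\sigma(f)$ and $\sigma\tau(f)$ must be compared. Since $\tau(f) \in Y_{01}$ and $\dcl(\tau(f)\ov{b_0}) = \dcl(f\ov{b_0})$, the element $\tau(f)$ is $\dcl(f\ov{b_0})$-definable, so an automorphism realizing $\sigma$ (which fixes $\ov{b_0}$ and, via Lemma \ref{uniformaction} applied with $C$ chosen appropriately, acts as right multiplication by $\bsigma$ uniformly) sends $\tau(f)$ to $\tau(f).\bsigma$; on the other hand it sends $f$ to $f.\bsigma = \sigma(f)$, and then $\tau$ sends that to $\tau(\sigma(f))$. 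Pinning down that $\tau(f.\bsigma) = \tau(f).\bsigma$ — i.e.\ that $\tau$, which a priori only permutes $Y_{01}$ over $\ov{b_0}$, is equivariant for the binding-group right-action — is the real content; it should follow because the right-action of $G$ on $Y_{01}$ is $\ov{b_0}$-definable (it extends the $\emptyset$-definable action on $\mor(\CG)$) and hence is preserved by any $\ov{b_0}$-elementary bijection of $Y_{01}$, which $\tau$ is. Once $G_{01} \leq Z(F_{01})$ is in hand, the two stated consequences are easy: $f,\sigma(f) \equiv_{\ov{b_0}} k,\sigma(k)$ because $\sigma$ being central means it is $[(f,\sigma(f))] = [(k,\sigma(k))]$-independent of basepoint in the sense of clause (3), made precise by applying an automorphism over $\ov{b_0}$ sending $f \mapsto k$ and using that it commutes with $\sigma$; and $b' = \mu(b_1)$ interdefinable with $b_1$ holds since $\mu$ fixes $\ov{b_0}$ and $b_1 \in \dcl(f\ov{b_0}) = \dcl(\mu(f)\ov{b_0})$ with $\mu(b_1) \in \dcl(\mu(f)\ov{b_0})$ as well.

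Clause (5) is then a specialization: given $\tau \in F_{01}$ and $f,g \in X_{b_{01}}$ with $g = \sigma(f) = f.e$ for the unique $\sigma \in G_{01}$, $e \in X_{b_0}$, the equivariance of $\tau$ for the $G$-action (established in the course of (4)) gives $\tau(g) = \tau(f.e) = \tau(f).e$ directly, since $e$ corresponds to an element of $G$ and the right-action is $\ov{b_0}$-definable hence $\tau$-equivariant; the relation $f,\tau(f) \equiv_{\ov{b_0}} g,\tau(g)$ follows because right-multiplication by $e$ is induced by an automorphism fixing $\ov{b_0}$ (Lemma \ref{uniformaction} again, with the singleton $C = \emptyset$ or absorbed), carrying the pair $(f,\tau(f))$ to $(f.e, \tau(f).e) = (g,\tau(g))$; and $\sigma(f,\tau(f)) = (f.e, \tau(f).e)$ is just the restatement that $\sigma$ acts as right-multiplication by $e = \bsigma$ componentwise, valid on $\tau(f)$ because $\tau(f) \in Y_{01}$ lies in the $G$-orbit where the action is defined. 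The main obstacle throughout is the well-definedness and $\ov{b_0}$-definability of the $G$-action \emph{on all of} $Y_{01}$ (not merely on $X_{b_{01}}$ or on $\mor(\CG)$), and the consequent equivariance of arbitrary $\tau \in F_{01}$ under it; I expect this to require a short argument that the $\emptyset$-definable action of $G$ on $\mor(\CG)$ restricts, after passing to the $\dcl(f_{01}\ov{b_0})$-definable closure, to a well-defined action on $Y_{01}$ compatible with the regular $F_{01}$-action.
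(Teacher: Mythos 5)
Your handling of (1)--(3), of the centrality assertion $G_{01}\leq Z(F_{01})$, and of (5) is essentially the paper's own argument: regularity comes from the fact that any two elements of $Y_{01}$ are interdefinable over $\ov{b_0}$ and $Y_{01}\subseteq\dcl(f_{01}\ov{b_0})$, and the two ingredients you isolate for centrality --- that $\tau\in F_{01}$ satisfies $\tau(k.\sigma_0)=\tau(k).\sigma_0$ because $\tau$ is elementary over $\ov{b_0}\supseteq X_{b_0}$ and the composition of $\CG$ is $\emptyset$-definable, and that the extension of $\sigma\in G_{01}$ acts as right composition by $\sigma_0$ at the point $\tau(f_{01})$, obtained from Lemma \ref{uniformaction} applied simultaneously to $f_{01}$ and $\tau(f_{01})$ --- are exactly the paper's two steps, with the same appeal to freeness at the end. (Your ``$C$ chosen appropriately'' is $\{b_1,\ter(\tau(f_{01}))\}$; the paper is no more explicit there. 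Likewise in (5), where you re-run the Lemma \ref{uniformaction} computation, the paper simply quotes (4) via $\sigma(f,\tau(f))=(g,\tau(g))$; your ``$C=\emptyset$'' aside is not right as stated, but the computation is the one already carried out inside (4).)

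There is, however, a genuine gap at the last assertion of (4): that $b'=\mu(b_1)$ and $b_1$ are interdefinable. Your reason --- that $\mu$ fixes $\ov{b_0}$ and both $b_1$ and $\mu(b_1)$ lie in $\dcl(\mu(f)\ov{b_0})=\dcl(f\ov{b_0})$ --- proves nothing of the sort: two elements lying in a common definable closure gives no inclusion between $\dcl(b_1)$ and $\dcl(b')$, and does not even yield $b'\in\dcl(b_1\ov{b_0})$. The actual argument needs the centrality just proved. From $G_{01}\leq Z(F_{01})$ one first records (*): $f,\sigma(f)\equiv_{\ov{b_0}}k,\sigma(k)$ for all $f,k\in Y_{01}$ and $\sigma\in G_{01}$ (this part you do have). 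Then, given an automorphism fixing $b_1\ov{b_0}$ pointwise, clause (4) of Definition \ref{full_symm_witness} (isolation of $\tp(f_{01}/\ov{b_0}\,\ov{b_1})$ by $\tp(f_{01}/b_{01})$) lets one assume it acts on $X_{b_{01}}$ as some element of $G_{01}$, and then (*) forces it to fix $b'=\ter(\mu(f_{01}))$; hence $b'\in\dcl(b_1\ov{b_0})$, and symmetrically $b_1\in\dcl(b'\ov{b_0})$. Finally a stationarity argument is still required to remove the parameter $\ov{b_0}$ and conclude $\dcl(b_1)=\dcl(b')$. None of these three steps (the use of (*), the use of the isolation clause of the symmetric witness, and the passage from interdefinability over $\ov{b_0}$ to interdefinability over $\emptyset$) occurs in your proposal, and they are the entire content of that assertion, which is relied on later in the construction of $\CF$.
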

\begin{proof} (1)  is clear.

(2) comes from the fact that for any  $g_0,g_1\in Y_{01}$, they are
interdefinable over $\ov{b_0}$, and $Y_{01} \subseteq \dcl(g_i\bar
b_0)=\dcl(f_{01}\bar b_0)$. Hence from (1),
 it follows $G_{01}$ is
a subgroup of $F_{01}$. The rest clearly follows.

(3) comes from (2), particularly the regularity of the action.

(4) Suppose  $\sigma\in G_{01},\tau\in F_{01}$ are given.  Let
$g=\sigma(f_{01})=f_{01}.\sigma_0$  for some $\sigma_0\in X_{b_0}$, and
let $h=\tau(f_{01})$. Then
$\tau(g)=\tau(f_{01}.\sigma_0)$, and
since $\tau$ fixes $\ov{b_0}$,  $=\tau(f_{01}).\sigma_0=h.\sigma_0.$ Now
by Lemma \ref{uniformaction}, there is an automorphism
fixing $\ov{b}_0$  and sending $(f_{01},h)$ to $(f_{01}.\sigma_0,h.\sigma_0)$, so
$(f_{01},h)\equiv_{\ov{b}_0} (f_{01}.\sigma_0,h.\sigma_0)=(g,\tau(g))$. But since $h\in \dcl(f_{01},\ov{b_0})$ and $g=\sigma(f_{01})$,
we must have
that $\sigma (h)=\tau(g)$, so $=\sigma\circ\tau(f_{01})=\tau\circ\sigma(f_{01})$.
Then due to regularity, we conclude $\sigma\in Z(F_{01})$.

Hence if $k=\mu(f)$ for some $\mu\in F_{01}$, then $\mu(f,\sigma(f))=(k,\sigma(k))$,
in particular $f,\sigma(f)\equiv_{\ov{b_0}}k,\sigma(k)$ \  (*).
 Now if an automorphism $\sigma'$ fixes $b_1\ov{b_0}$ then clearly we can assume
 $\sigma'\in G_{01}$. Then by (*), $\sigma'$ fixes $b'$ too.
Similarly it follows $b_1\in \dcl(b'\ov{b_0})$.
Then due to stationarity it too follows $\dcl(b_1)=\dcl(b')$.

(5) %There is $\sigma \in G_{01}$ such that $g=\sigma(f)$.
%Without loss of generality we can assume $g=f.e$.
  Due to (4), $\sigma(f,\tau(f))=(g,\tau(g))$.
Hence $f,\tau(f)\equiv_{\ov{b_0}} g,\tau(g)$.
Now since $f$ fixes $\ov{b_0}\supseteq X_{b_0}$, particularly it fixes $e$. Hence
$\tau(f.e)=\tau(f).\tau(e)=\tau(f).e$, and the last one follows too.
\end{proof}

But  for $\tau\in F_{01}$ and $f,g\in Y_{01}$, in general $\tp(f,\tau(f))\ne \tp(g,\tau(g))$ (in contrast  to Claim \ref{centergp}(4),(5) and Fact \ref{bdgpiso}(2),(3)). Neither needs $G_{01}$  be equal to $Z(F_{01})$ (see Example \ref{noncentergp}).
\medskip

{\bf For the rest of the paper we fix independent $a,b\models p$ and $f_{ab}\in X_{ab}$.}
We define $Y_{ab}$ just like $Y_{01}$ but with $b_{01},f_{01}$ being replaced by $ab,f_{ab}$.

\begin{lemma}\label{wdfn}
Let $c\models p$ and $c\indo ab$. Let $g\in X_{ca}$. Then for $f\in
Y_{ab}$, it follows $h=f.g\in Y_{cb}$. Moreover  for $h_0=f_{ab}.g$,
we have
 $$h_0f_{ab}\equiv_{\ov{ac}}hf\mbox{ and }f_{ab}f\ov a\equiv_{\ov{b}} h_0h\ov
 c.$$
\end{lemma}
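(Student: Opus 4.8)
The plan is to verify each assertion by transporting the defining properties of $Y_{ab}$ (namely: living in $\dcl(f_{ab},\ov a)$, having the same type over $\ov a$ as $f_{ab}$, and generating the same definable closure over $\ov a$) to the corresponding properties of $Y_{cb}$, using the fresh element $g\in X_{ca}$ to translate everything from base $a$ to base $c$. First I would record the basic interdefinabilities: since $g\in X_{ca}\subseteq\ov{ac}$ and $g$ is invertible in the groupoid, we have $\dcl(h_0\ov c)=\dcl(f_{ab}.g,\ov{ac})=\dcl(f_{ab},\ov{ac})=\dcl(f_{ab},\ov a,\ov c)$ (the last equality because $c\indo ab$ and $a\in\dcl(f_{ab})$, so $\ov c$ contributes nothing new to $\dcl(f_{ab}\ov a)$ beyond $\ov c$ itself — this needs a short forking argument). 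The same computation with $f$ in place of $f_{ab}$ gives $\dcl(h\ov c)=\dcl(f,\ov a,\ov c)=\dcl(f_{ab},\ov a,\ov c)$, using $f\in Y_{ab}$. This already yields $\dcl(h\ov c)=\dcl(h_0\ov c)$, which is half of showing $h\in Y_{cb}$.

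Next I would handle the type condition $h\equiv_{\ov c}h_0$. The natural route is Lemma \ref{uniformaction}: since $f\equiv_{\ov a}f_{ab}$ (as $f\in Y_{ab}$) and $c\indo ab$, stationarity over $\ov a$ gives $f\equiv_{\ov a\ov c}f_{ab}$, witnessed by an automorphism $\mu$ fixing $\ov a\ov c$ pointwise with $\mu(f_{ab})=f$; then $\mu(g)=g$ since $g\in\ov{ac}$, so $\mu(h_0)=\mu(f_{ab}.g)=\mu(f_{ab}).\mu(g)=f.g=h$, giving $h_0\equiv_{\ov a\ov c}h$, hence $h_0\equiv_{\ov c}h$. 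Combined with the $\dcl$ computation and the obvious containment $h\in\dcl(f_{ab},\ov c)=\dcl(h_0,\ov c)$ (again via $g$), this shows $h\in Y_{cb}$ — noting $Y_{cb}$ is built from $cb$ and any element of $X_{cb}$, and $h_0=f_{ab}.g\in X_{cb}$ serves as the base point. I should double-check that $h_0\in X_{cb}$: this is exactly composability in $\CG$, $X_{ca}\circ\,\{f_{ab}\}\subseteq X_{cb}$.

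For the two displayed conjugacies, the automorphism $\mu$ produced above does most of the work for the first: $\mu$ fixes $\ov{ac}$ pointwise and sends $(h_0,f_{ab})=(f_{ab}.g,f_{ab})$ to $(f.g,f)=(h,f)$, so $h_0f_{ab}\equiv_{\ov{ac}}hf$. For the second, $f_{ab}f\ov a\equiv_{\ov b}h_0h\ov c$, I would instead pick an automorphism witnessing $a\equiv_{\ov b}c$ — valid since $a,c\models p$ are both independent from $b$ over $\emptyset=\acl(\emptyset)$, hence $a\equiv_{\ov b}c$ by stationarity — and arrange (using Lemma \ref{uniformaction} to adjust within the relevant vertex group action, as in the proof of Claim \ref{centergp}(4)) that it carries $f_{ab}$ to $h_0$ and $f$ to $h$; the point is that $h_0=f_{ab}.g$ and $h=f.g$ are obtained from $f_{ab},f$ by the \emph{same} element $g\in X_{ca}$, and the $G$-action on morphisms is uniform and base-point independent (Fact \ref{bdgpiso}(2)), so a single automorphism can move $a\mapsto c$ while dragging the pair $(f_{ab},f)$ to $(h_0,h)$ coherently.

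The main obstacle I anticipate is the second displayed conjugacy: producing \emph{one} automorphism over $\ov b$ that simultaneously implements $a\mapsto c$ and $(f_{ab},f)\mapsto(h_0,h)$ in a way compatible with composition by $g$. Stationarity gives an automorphism moving $a$ to $c$ over $\ov b$, but it need not send $f_{ab}$ to $h_0$ on the nose; one must then correct it by an element of the vertex group $X_{b}$ (or an automorphism fixing $\ov c\ov b$ supplied by Lemma \ref{uniformaction}), and one must check that the correction that fixes $f_{ab}\mapsto h_0$ automatically gives $f\mapsto h$ — this is precisely where the uniformity of the $G$-action (the fact that $\bsigma.f_{ab}=h_0$ forces $\bsigma.f=h$ when $h_0,h$ are the $g$-translates, cf. Fact \ref{bdgpiso}(2),(3)) is needed, and it must be applied with some care since $f\in Y_{ab}$ is only in $\dcl(f_{ab}\ov a)$, not in $X_{ab}$. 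The remaining steps are routine definable-closure and stationarity bookkeeping.
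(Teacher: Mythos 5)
Your proposal breaks down at the central point of the lemma. The definable-closure identities you use to get $h\in Y_{cb}$ are false: since $h_0=f_{ab}.g\in X_{cb}\subseteq\ov{cb}$, we have $\dcl(h_0\ov c)\subseteq\ov{cb}$, whereas $\dcl(f_{ab},\ov a,\ov c)$ contains $\ov a$, and $\ov a\cap\ov{cb}=\acl(\emptyset)$ because $a\indo bc$; likewise $\dcl(f_{ab},\ov c)\neq\dcl(h_0,\ov c)$, since $f_{ab}\notin\ov{cb}$ (as $a$ and $c$ are independent over $b$, $\ov{ab}\cap\ov{cb}=\ov b$, while $f_{ab}\notin\dcl(\ov a\,\ov b)$). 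From $h_0$ and $\ov c$ alone one cannot recover $g$ or $f_{ab}$, so your computation only yields interdefinability of $h$ and $h_0$ over $\ov{ac}$, which is easy; the whole content of the lemma is interdefinability over $\ov c$ alone, i.e.\ that the base can be moved from $\ov a$ to $\ov c$. The paper handles this by contradiction: if $h\notin\dcl(\ov c h_0)$, pick $h'\neq h$ with $h'\equiv_{\ov c h_0}h$; since $\ov{ca}$ and $\ov{cb}$ are independent over $\ov c$, stationarity gives a $\ov{ca}$-automorphism $\tau$ with $\tau(h_0h)=h_0h'$; then $\tau$ fixes $g$ and hence $f_{ab}=h_0.g^{-1}$ and all of $\ov a$, yet sends $f=h.g^{-1}$ to $h'.g^{-1}\neq f$, contradicting $f\in\dcl(f_{ab}\ov a)$ (and symmetrically for $h_0\in\dcl(h\ov c)$). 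This idea is absent from your write-up, and the step you substitute for it does not survive inspection. (Your argument for $h\equiv_{\ov{ac}}h_0$ via the stationarity automorphism $\mu$, and the first displayed equivalence $h_0f_{ab}\equiv_{\ov{ac}}hf$, are correct and agree with the paper; Lemma \ref{uniformaction} is not even needed there.)

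For the second displayed equivalence $f_{ab}f\ov a\equiv_{\ov b}h_0h\ov c$, you correctly identify the obstacle but do not overcome it: an automorphism over $\ov b$ sending $a\mapsto c$ can be corrected so that $f_{ab}\mapsto h_0$, but the uniformity statements you invoke (Fact \ref{bdgpiso}(2),(3)) concern the action of $G$ on $X_{ab}$, while $f$ lies only in $Y_{ab}$; the extended action $F$ on $Y_{ab}$ is constructed \emph{after} this lemma and depends on it, so nothing yet guarantees the corrected map also sends $f\mapsto h$. The paper's device is different: choose $d\models p$ with $d\indo abc$ and $k_0\in X_{db}$; by the first part of the lemma one can write $f=k.(k_0^{-1}.f_{ab})$ with $k\in Y_{db}$, whence $h=f.g=k.k_0^{-1}.h_0$; stationarity gives $f_{ab}\ov a\equiv_{\ov{bd}}h_0\ov c$, and since the witnessing automorphism fixes $k,k_0\in\ov{bd}$, it carries $f$ to $h$, yielding the equivalence over $\ov{bd}\supseteq\ov b$. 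That factoring of $f$ through an auxiliary independent realization $d$, so that the ``$Y$-part'' of $f$ sits inside the fixed set $\ov{bd}$, is the missing ingredient in your plan.
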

\begin{proof}
Note that $h_0\in X_{cb}$. By stationarity, there is a
$\ov{ca}$-automorphism $\mu$ such that  $\mu(f_{ab})=f$. Then
$\mu(h_0)=\mu(f_{ab}.g)=\mu(f_{ab}).\mu(g)=f.g=h\in\ov{cb}$. We want
to see that $h,h_0$ are interdefinable over $\ov{c}$. Suppose not
say there is $h'\equiv_{\ov{c}h_0}h$ and $h'\ne h$. Then again by
stationarity there is a $\ov{ca}$-automorphism $\tau$ such that
$\tau(h_0h)=h_0h'$. Then for $f=h.g^{-1}$ and $f':=h'.g^{-1}$,
we have $f\ne f'$ but $\tau(f_{ab},f)=\tau(h_0.g^{-1},h.g^{-1})
=(h_0.g^{-1},h'.g^{-1})=(f_{ab},f'),$ a contradiction. Similarly one
can show that $h_0\in\dcl(\ov{c}h)$. Hence $h\in Y_{cb}$.

Now $\mu$
witnesses $h_0f_{ab}\equiv_{\ov{ac}}hf$. To show $f_{ab}f\ov
a\equiv_{\ov{b}} h_0h\ov c$, choose $d(\models p)\indo abc$. Now for
$k_0\in X_{db}$,  by our proof there is $k\in Y_{db}$ such that
$f=k.(k^{-1}_0.f_{ab})$. Then
$h=k.k^{-1}_0.(f_{ab}.g)=k.k^{-1}_0.h_0.$ Now by stationarity,
$f_{ab}\ov a\equiv_{\ov{bd}}h_0\ov c$. Since $k,k_0\in \ov{bd}$, as
desired $f_{ab}f\ov a\equiv_{\ov{bd}} h_0h\ov c$.
\end{proof}

Now we start to construct the new groupoid mentioned. Our first
approximation of  $\mor_{\CF}(a,b)$ is $Y_{ab}$. Beware that
$Y_{ab}(\supseteq X_{ab})$ need not be definable  nor
type-definable. It is just an $ab$-invariant set. So our groupoid
$\CF$ will  only be invariant, and it will be definable only under additional hypotheses (e.g. $\omega$-categoricity).

We recall the binding group $G$ acting on $\CG$ as described in Remark \ref{bdgp}.
 The action need {\em not} be a structure automorphism,
since for $\bsigma\in G$,  in general  $\id_a\not \equiv \bsigma.\id_a\in X_a$, but it is so for $f\in X_{ab}$  (or more generally as in Lemma
\ref{uniformaction} above).
As pointed out in Fact \ref{bdgpiso}(3), there is the group isomorphism  $\rho_{ab}:G\to G_{ab}$ such
that $\rho_{ab}(\bsigma)(f)=\bsigma. f$ for any $f\in X_{ab}$.
We write
$\sigma_{ab}$ for $\rho_{ab}(\bsigma)$. But  when there is no chance
of confusion, we use $\sigma$ for both $\bsigma\in G$ and
$\sigma_{ab}\in G_{ab}$. Moreover, $\sigma_a$ denotes the unique element
in $\bsigma\cap X_a$.
Hence for $f\in X_{ab}$, $\sigma(f)=\sigma.
f=\sigma_b.f=f.\sigma_a$

\begin{remark}\label{tpreserv}
For $\sigma\in G$, and $f\in X_{ab}$ and $g\in X_{cd}$ with
$cd\equiv ab$,  since $G\subseteq \acl(\emptyset)$ we have $f,\sigma. f\equiv g,\sigma. g$.
\end{remark}

Now let $F_{ab}:=\Aut(Y_{ab}/\ov{a})$. Then as in Claim
\ref{centergp}.(2), $G_{ab}\leq F_{ab}$. As just said for any
$cd\equiv ab$, there is the canonical isomorphism between
$\rho_{cd}\circ\rho^{-1}_{ab}:G_{ab}\to G_{cd}$.
 We somehow try to find the
canonically extended isomorphism between $F_{ab}$ and $F_{cd}$ as
well. We do this as follows. Fix an enumeration of  $Y_{ab}=\{g_i\}_i\cup \{g'_j\}_j$ such that
$X_{ab}=\{g_i\}_i$, (and {\em the rest construction depends on this}).
Let $Y_{cd}=\{h_i\}_i\cup \{h'_j\}_j$ such that $X_{cd}=\{h_i\}_i$ and
 $\la
g_i\ra^\frown \la g'_j\ra ab\equiv \la h_i\ra^\frown \la h'_j\ra
cd$.
 Now due to regularity of the  action, for each $i$ or $j$ there
is unique $\mu^{ab}_i$ or $\mu^{ab}_j\in F_{ab}$ such that
$\mu_i(g_0)=g_i$ or $\mu_j(g_0)=g'_j$. Similarly we have
$\mu^{cd}_i$ or $\mu^{cd}_j\in F_{cd}.$

\begin{claim}
The correspondence $\mu^{ab}_i\mapsto\mu^{cd}_i$ or
$\mu^{ab}_j\mapsto\mu^{cd}_j$ is a well-defined isomorphism from
$F_{ab}$ to $F_{cd}$ extending $\rho_{cd}\circ\rho^{-1}_{ab}$.
\end{claim}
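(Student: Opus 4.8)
The plan is to realize the correspondence in the claim as conjugation by a single automorphism of $\CM$ and to read off all three assertions from that. Write $\Phi$ for the map in question, let $g_0\in X_{ab}$ be the fixed base point used to define the $\mu^{ab}_{\ell}$, and let $h_0\in X_{cd}$ be the matching entry of the chosen enumeration of $Y_{cd}$. Claim \ref{centergp}(2) transfers verbatim from $b_0,b_1$ to each of the pairs $a,b$ and $c,d$ (by stationarity $ab\equiv cd\equiv b_{01}$), so the orbit maps $o_{ab}\colon F_{ab}\to Y_{ab}$, $\mu\mapsto\mu(g_0)$, and $o_{cd}\colon F_{cd}\to Y_{cd}$, $\mu\mapsto\mu(h_0)$, are bijections. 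Hence the $\mu^{ab}_{\ell}$ enumerate $F_{ab}$ without repetition, and $\Phi=o_{cd}^{-1}\circ\iota\circ o_{ab}$, where $\iota\colon Y_{ab}\to Y_{cd}$ is the given index-matching bijection; in particular $\Phi$ is well defined and bijective, and it remains only to see that it is a homomorphism extending $\rho_{cd}\circ\rho_{ab}^{-1}$.

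Next I would pick an automorphism $\pi$ of $\CM$ witnessing $\langle g_i\rangle^{\frown}\langle g'_j\rangle ab\equiv\langle h_i\rangle^{\frown}\langle h'_j\rangle cd$, so that $\pi$ carries the $\ell$-th entry $e_\ell$ of $Y_{ab}$ to the $\ell$-th entry $e'_\ell$ of $Y_{cd}$, $\pi(Y_{ab})=Y_{cd}$, $\pi(a)=c$, and hence $\pi(\ov a)=\ov c$. For $\mu\in F_{ab}$ let $c_\pi(\mu)$ be the restriction to $Y_{cd}$ of $\pi\hat\mu\pi^{-1}$, where $\hat\mu$ is any automorphism of $\CM$ extending $\mu$; this is independent of the choice of $\hat\mu$ (two such agree on $Y_{ab}$), it maps $Y_{cd}$ onto $Y_{cd}$ and fixes $\ov c=\pi(\ov a)$ pointwise because $\mu$ fixes $\ov a$, and $c_\pi\colon F_{ab}\to F_{cd}$ is a group isomorphism with inverse $c_{\pi^{-1}}$. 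Evaluating on the base point, $c_\pi(\mu^{ab}_{\ell})(h_0)=\pi(\mu^{ab}_{\ell}(g_0))=\pi(e_\ell)=e'_\ell$, so $c_\pi(\mu^{ab}_{\ell})=\mu^{cd}_{\ell}$; that is, $\Phi=c_\pi$, and in particular $\Phi$ is an isomorphism.

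Finally, since the witnessing automorphism $\pi$ fixes $\acl(\emptyset)$ pointwise it fixes the binding group $G\subseteq\acl(\emptyset)$ pointwise. By Fact \ref{bdgpiso}(3) the element $\rho_{ab}(\bsigma)\in G_{ab}\subseteq F_{ab}$ acts on $Y_{ab}$ by $g_0\mapsto\bsigma.g_0\in X_{ab}$, so $\rho_{ab}(\bsigma)=\mu^{ab}_i$ for the $i$ with $g_i=\bsigma.g_0$; using that the $G$-action on $\mor(\CG)$ is $\emptyset$-definable and that $\pi(g_0)=h_0$, one gets $\Phi(\rho_{ab}(\bsigma))(h_0)=c_\pi(\mu^{ab}_i)(h_0)=\pi(\bsigma.g_0)=\pi(\bsigma).\pi(g_0)=\bsigma.h_0=\rho_{cd}(\bsigma)(h_0)$, and by regularity of the $F_{cd}$-action this forces $\Phi(\rho_{ab}(\bsigma))=\rho_{cd}(\bsigma)$, i.e. $\Phi\restriction G_{ab}=\rho_{cd}\circ\rho_{ab}^{-1}$. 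The only point that requires genuine care is this last one — that the automorphism $\pi$ coming from a type-equality over the (named) base fixes $G$ pointwise, so that conjugation by $\pi$ is compatible with the canonical identifications $\rho_{ab},\rho_{cd}$; everything else is a routine transfer of the regularity statements of Claim \ref{centergp} and Fact \ref{bdgpiso} to the pairs $a,b$ and $c,d$.
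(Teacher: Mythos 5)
The non-routine content of ``well-defined'' in this claim is not that the assignment $\mu^{ab}_\ell\mapsto\mu^{cd}_\ell$ is a function once a matching enumeration is fixed --- that is immediate from regularity, as you note --- but that the resulting isomorphism $F_{ab}\to F_{cd}$ does not depend on \emph{which} arrangement $\{h_i\}_i\cup\{h'_j\}_j$ of $Y_{cd}$ with $\la h_i\ra^\frown\la h'_j\ra cd\equiv\la g_i\ra^\frown\la g'_j\ra ab$ is chosen. Only the enumeration of $Y_{ab}$ is fixed in the construction; the enumeration of $Y_{cd}$ is merely required to realize the same type, and such arrangements are not unique (e.g.\ any $\sigma\in G_{cd}$ moves one to another). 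The paper's proof is devoted entirely to this point: for another admissible arrangement $\{k_i\}_i\cup\{k'_j\}_j$ one has $k_0=\sigma(h_0)$ with $\sigma\in G_{cd}$, and the centrality $G_{cd}\leq Z(F_{cd})$ from Claim \ref{centergp}(4),(5) gives $\sigma(h_0,\mu^{cd}_\ell(h_0))=(k_0,\mu^{cd}_\ell(k_0))$, whence $\mu^{cd}_\ell(k_0)=k_\ell$, so both arrangements yield the same $\mu^{cd}_\ell$. Your proposal never addresses this: you take the index-matching bijection $\iota$ as given, read ``well defined'' as injectivity of the enumeration of $F_{ab}$, and explicitly locate the delicate point elsewhere (the compatibility with $G$). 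Without independence of the choice, the ``canonical'' isomorphism $\rho^F_{cd}$ introduced right after the claim, the action $\bmu.f:=\mu_{cd}(f)$ of $F$ on $Y_{cd}$, and the automorphism-invariance of the composition in Claim \ref{compost} are all left unjustified.

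Moreover the gap is not closed by a one-line remark inside your conjugation framework: if $\pi$ and $\pi'$ are automorphisms matching the fixed enumeration of $Y_{ab}$ with two admissible arrangements of $Y_{cd}$, then $\pi'\pi^{-1}$ maps $Y_{cd}$ onto itself and fixes $cd$, but need not fix $\ov{c}$ pointwise, so it need not induce an element of $F_{cd}$, and there is no obvious reason why conjugation by $\pi$ and by $\pi'$ give the same map $F_{ab}\to F_{cd}$; making this work is precisely where $G_{cd}\leq Z(F_{cd})$ enters. For a \emph{fixed} matching enumeration, your argument is fine and in fact tidy: $c_\pi$ is an isomorphism carrying $\mu^{ab}_\ell$ to $\mu^{cd}_\ell$, and since $\pi$ fixes $G\subseteq\acl(\emptyset)$ pointwise (the base $A$ being named and algebraically closed), it restricts to $\rho_{cd}\circ\rho^{-1}_{ab}$ on $G_{ab}$, which is essentially the paper's appeal to Remark \ref{tpreserv}. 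But the claim's actual content --- well-definedness over the choice of arrangement of $Y_{cd}$ --- is missing from your proof.
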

\begin{proof}
Assume $\{k_i\}_i\cup \{k'_j\}_j$ is another arrangement of $Y_{cd}$
such that $\la k_i\ra^\frown \la k'_j\ra \equiv_{cd}\la
h_i\ra^\frown \la h'_j\ra$. Then $k_0=\sigma(h_0)$ for some
$\sigma\in G_{cd}$. Thus by Claim \ref{centergp}, we have
$\sigma(h_0,\mu^{cd}_i(h_0))=(k_0,\mu^{cd}_i(k_0))$ and so
$h_0,\mu^{cd}_i(h_0)\equiv_{\ov c}k_0,\mu^{cd}_i(k_0)$. Then due to
interdefinability, we must have $\mu^{cd}_i(k_0)=k_i$. Similarly
$\mu^{cd}_j(k_0)=k'_j$. Hence the map is well-defined. It easily
follows that the map in fact is an isomorphism.  Moreover due to
\ref{tpreserv} we  see that it  extends
$\rho_{cd}\circ\rho^{-1}_{ab}$.
\end{proof}

Hence now we  fix an {\em extended binding group} $F\geq G$
isomorphic to $F_{01}$. (Contrary to $G\subseteq \acl(\emptyset)$, $F$ {\em need not} live in $\acl(\emptyset)$.) Then there is a canonical isomorphism
$\rho^F_{cd}:F\to F_{cd} $ extending $\rho_{cd}$ in such a way that
$\rho^F_{cd}\circ(\rho^F_{ab})^{-1}$ is the correspondence defined
above. Now for $\bmu\in F$, we use $\mu_{ cd}$ or simply $\mu$ to
denote $\rho^F_{cd}(\bmu)$. Note that a mapping $\bmu.
f:=\mu_{cd}(f)$ is clearly a regular action of $F$ on $Y_{cd}$
extending that of $G$ on $X_{cd}$.

\begin{claim}
If $cd\indo a$, then for $f\in X_{cd}, g\in X_{ac}$, we have
$\bmu. (f.g)= (\bmu.f).g$.
\end{claim}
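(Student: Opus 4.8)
The statement says: if $cd\indo a$, then for $f\in X_{cd}$ and $g\in X_{ac}$, we have $\bmu.(f.g)=(\bmu.f).g$ for $\bmu\in F$. Here $f.g\in X_{ad}$ by composition in $\CG$ (from $a$ to $c$ via $g$, then $c$ to $d$ via $f$), and $\bmu$ acts on $Y_{cd}$ and on $Y_{ad}$ via the fixed isomorphisms $\rho^F_{cd}$ and $\rho^F_{ad}$. The plan is to reduce this to the already-established naturality/uniformity statements: Lemma~\ref{wdfn}, Lemma~\ref{uniformaction}, and Claim~\ref{centergp}, together with the way the extended binding group $F$ was constructed from a chosen enumeration.

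\medskip

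\emph{Step 1: Reduce to a single $\bmu$ generating the action via the enumeration data.} Recall $F\cong F_{01}$ acts regularly on each $Y_{cd}$, and $\rho^F_{cd}$ was pinned down by matching enumerations $\langle g_i\rangle^\frown\langle g'_j\rangle$ of $Y_{ab}$ with $\langle h_i\rangle^\frown\langle h'_j\rangle$ of $Y_{cd}$, where the $g_i$ (resp.\ $h_i$) enumerate the definable-in-$\CG$ part $X$. So it suffices to check the identity after applying a generating family of automorphisms; concretely, by Claim~\ref{centergp}(2)--(3) it is enough to verify that the bijection $Y_{ad}\to Y_{cb\text{-type set}}$ induced by $x\mapsto x.g$ intertwines the $F$-actions on $Y_{ad}$ and on the image, since $f$ is an arbitrary element of $X_{cd}$ and $g$ is fixed. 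The main point is: \emph{right composition by $g\in X_{ac}$ is an $F$-equivariant map}. This is really a statement about how $F$-translates are transported along morphisms of $\CG$.

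\medskip

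\emph{Step 2: Use Lemma~\ref{wdfn} to transport $Y$-sets along $g$.} By Lemma~\ref{wdfn} (applied with the roles of the indices matched to the current $a,c,d$), for $h\in Y_{ad}$ we have $h.g^{-1}\in Y_{cd}$ — or, in the direction we need, right composition with $g$ sends $Y_{cd}$ into $Y_{ad}$ and is a bijection onto it (regularity plus the cardinality count $|F|=|Y_{cd}|=|Y_{ad}|$). The independence hypothesis $cd\indo a$ is exactly what is needed to invoke Lemma~\ref{wdfn} and Lemma~\ref{uniformaction} in this configuration. Now pick $f_0\in X_{cd}$ a fixed base point; by regularity every $f\in X_{cd}$ is $\bsigma.f_0$ for a unique $\bsigma\in G$, and by Fact~\ref{bdgpiso} / Remark~\ref{bdgp} the $G$-action already satisfies the associativity law $(g.f).\bsigma=g.(f.\bsigma)$ and $\bsigma.(f.g)=(\bsigma.f).g$ — so for $\bmu\in G\le F$ the claim is immediate. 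The content is thus to extend from $G$ to $F$.

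\medskip

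\emph{Step 3: Extend from $G$ to $F$ via the enumeration-compatibility of $\rho^F$.} Fix enumerations so that $\langle g_i\rangle$ lists $X_{cd}$ inside $Y_{cd}$ and $\langle f_i.g\rangle$ correspondingly lists $X_{ad}$ inside $Y_{ad}$; extend both by the non-$X$ parts, and note by Lemma~\ref{wdfn} (the ``$h_0 f_{ab}\equiv_{\ov{ac}} hf$'' and the moreover clause) that the tuple $\langle$ elements of $Y_{cd}\rangle$ and the tuple $\langle$ their right-$g$-translates in $Y_{ad}\rangle$ have the same type over the relevant base — so right composition by $g$ respects the \emph{choice of enumeration} used to define $\rho^F_{cd}$ and $\rho^F_{ad}$. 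Since $\rho^F_{cd}$ and $\rho^F_{ad}$ are, by construction, the unique isomorphisms $F\to F_{cd}$, $F\to F_{ad}$ sending the distinguished generators $\mu^{ab}_i\mapsto\mu^{cd}_i$, $\mu^{ab}_i\mapsto\mu^{ad}_i$, the equivariance of $x\mapsto x.g$ now follows by chasing: $\bmu$ acts on $Y_{cd}$ by sending $g_0\mapsto g_i$ (some $i$), hence on $Y_{ad}$ by $g_0.g\mapsto g_i.g$, which is exactly $(\bmu.g_0).g\mapsto$ wait — rather, $\bmu.(g_0.g)=g_i.g=(\bmu.g_0).g$, and then regularity spreads this from the base point $g_0$ to all $f\in X_{cd}$ using Step~2's $G$-associativity. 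Finally, having it for all $f\in X_{cd}$ and the action being determined, one gets it for the stated $f$.

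\medskip

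\emph{Main obstacle.} The delicate point is Step~3: making sure that the map ``right composition by $g$'' carries the \emph{particular} enumeration of $Y_{cd}$ used to fix $\rho^F_{cd}$ to an enumeration of $Y_{ad}$ compatible with the one fixing $\rho^F_{ad}$ — i.e.\ that the well-definedness argument of the preceding Claim (invariance under re-arrangement of $Y_{cd}$, via Claim~\ref{centergp}) applies so that the answer does not depend on enumeration choices. This is where the type-equalities from Lemma~\ref{wdfn} and the centrality $G_{cd}\le Z(F_{cd})$ from Claim~\ref{centergp}(4) do the real work: they guarantee that two legitimate enumerations of $Y_{ad}$ differ by an element of $G_{ad}$, which is central, so the induced isomorphism $F\to F_{ad}$ is unchanged. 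Everything else is bookkeeping with regularity of the actions.
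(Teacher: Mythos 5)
Your proof is correct and is essentially the paper's own route: the paper's entire proof of this claim is the single line that it follows from Lemma \ref{wdfn}, and your unpacking --- right composition by $g$ carries $Y_{cd}$ bijectively onto $Y_{ad}$ compatibly with the type-matched enumerations used to define $\rho^F_{cd}$ and $\rho^F_{ad}$, so that the well-definedness of the correspondence (via $G\leq Z(F)$, Claim \ref{centergp}) forces $\bmu.(f.g)=(\bmu.f).g$, first at a base point and then at every $f\in X_{cd}$ --- is exactly the intended argument. The one step to state with care is your enumeration-level type equality, since the displayed clauses of Lemma \ref{wdfn} are only pairwise: either apply the ``moreover'' clause to the pair $(f,\mu_{cd}(f))$, extend the resulting elementary map to an automorphism of $\CM$ and use its image as the matched enumeration of $Y_{ad}$ starting at $f.g$, or observe that the proof of Lemma \ref{wdfn} in fact yields a single automorphism fixing $\ov{d}$ and sending every $y\in Y_{cd}$ to $y.g$; with either fix your argument goes through as written.
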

\begin{proof}
This follows from Lemma \ref{wdfn}.
%Clearly $\mu(f.g)=\mu(k_d).k^{-1}_d.f.g=\mu(f).g.$
\end{proof}

Assume now $c(\models p)\indo ab$, and $g\in Y_{ab}, h\in Y_{bc}$ are
given. We want to define a composition $h.g\in Y_{ac}$ extending
that for $\CG$. Note now $g=\tau_0(g_0)$ and $h=\sigma_0(h_0)$ for
some $\mbox{\boldmath $\tau_0$}, \mbox{\boldmath $\sigma_0$}\in F$
and $g_0\in X_{ab}, h_0\in X_{bc}$. We define $h.g:=
(\mbox{\boldmath $\sigma_0$}\circ \mbox{\boldmath
$\tau_0$}).(h_0.g_0)=\sigma_0\circ\tau_0(h_0.g_0).$

\begin{claim}\label{compost}
The composition map is well-defined, invariant under any ($A$-)automorphism of $\CM$, and extends that of $\mor(\CG)$. For any $f\in Y_{ac}$,
there is unique $h'\in Y_{bc}$ ($g'\in Y_{ab}$, resp.) such that $f=h'.g$ ($f=h.g'$ resp.).
\end{claim}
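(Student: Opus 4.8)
The composition was defined via choices: write $g=\btau_0(g_0)$, $h=\bsigma_0(h_0)$ with $g_0\in X_{ab}$, $h_0\in X_{bc}$, $\bsigma_0,\btau_0\in F$, and set $h.g:=\bsigma_0\circ\btau_0\,.(h_0.g_0)$. The first task is \emph{well-definedness}: if also $g=\btau_1(g_1)$ and $h=\bsigma_1(h_1)$ with $g_1\in X_{ab}$, $h_1\in X_{bc}$, I must show $\bsigma_0\circ\btau_0\,.(h_0.g_0)=\bsigma_1\circ\btau_1\,.(h_1.g_1)$. Since $g_0,g_1$ both lie in $X_{ab}$, regularity of the $G$-action gives a unique $\bmu\in G$ with $g_1=\bmu.g_0$, hence $\btau_1\circ\bmu=\btau_0$ in $F$ (both send $g_0$ to $g$, using regularity of the $F$-action on $Y_{ab}$, Claim \ref{centergp}(2)); similarly $\bsigma_1\circ\bnu=\bsigma_0$ for the unique $\bnu\in G$ with $h_1=\bnu.h_0$. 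Then $h_1.g_1=(\bnu.h_0).(\bmu.g_0)=\bnu.(h_0.(\bmu.g_0))=\bnu.((h_0.\bmu).g_0)$ — here I use that $\bmu\in G$ acts on $X_{ab}$ both on the left and on the right with $\bmu.g_0=g_0.\bmu$ (Remark \ref{bdgp}) — and $h_0.\bmu=\bmu.h_0$ again by Remark \ref{bdgp} viewed in $X_{bc}$, so $h_1.g_1=(\bnu\bmu).(h_0.g_0)$ with the product taken in the abelian group $G$. Applying $\bsigma_1\circ\btau_1$ and using that the action of $F$ on $Y_{ac}$ extends that of $G$ on $X_{ac}$, $\bsigma_1\circ\btau_1\,.(h_1.g_1)=\bsigma_1\circ\btau_1\circ\bnu\circ\bmu\,.(h_0.g_0)$. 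Finally $\bsigma_1\circ\bnu=\bsigma_0$ and $\btau_1\circ\bmu=\btau_0$, and since $\bmu,\bnu\in G\leq Z(F)$ (Claim \ref{centergp}(4)) they may be commuted past $\btau_1,\bsigma_1$, giving $\bsigma_0\circ\btau_0\,.(h_0.g_0)$ as required. I expect \textbf{this centrality bookkeeping to be the main obstacle}: one must be careful that $G$ is central in \emph{each} $F_{xy}$ and that the fixed isomorphisms $\rho^F$ are compatible with these inclusions, but both are exactly what Claim \ref{centergp}(4) and the construction of $F$ deliver.

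\emph{Invariance under automorphisms} is then easy: an automorphism $\pi$ of $\CM$ sends $g=\btau_0(g_0)$ to $\pi(g)=\btau_0^{\pi}(\pi(g_0))$, where $\pi(g_0)\in X_{\pi(a)\pi(b)}$ and $\btau_0^{\pi}\in F_{\pi(a)\pi(b)}$ is the image of $\btau_0$ under the canonical isomorphisms (which commute with $\pi$ by Remark \ref{tpreserv} and the way $F$ was pinned down), and likewise for $h$; so $\pi(h.g)=\pi(h).\pi(g)$ by applying the definition on both sides. \emph{Extension of the composition of $\CG$}: if $g\in X_{ab}$ and $h\in X_{bc}$ we may take $\btau_0=\bsigma_0=\id$, $g_0=g$, $h_0=h$, and then $h.g$ in $\CF$ equals $h.g$ in $\CG$ by definition.

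For the \emph{existence and uniqueness of a left (right) divisor}: given $f\in Y_{ac}$, write $f=\brho(f_0)$ for some $\brho\in F$, $f_0\in X_{ac}$ (regularity of the $F$-action on $Y_{ac}$), and write $g=\btau_0(g_0)$, $g_0\in X_{ab}$. In the groupoid $\CG$ there is a unique $h_0\in X_{bc}$ with $f_0=h_0.g_0$. Put $h':=(\brho\circ\btau_0^{-1})(h_0)\in Y_{bc}$; a direct computation using the definition of composition and the well-definedness just proved shows $h'.g=\brho\circ\btau_0^{-1}\circ\btau_0\,.(h_0.g_0)=\brho\,.f_0=f$. For uniqueness, if $h''.g=f$ too, write $h''=\bsigma(h_0')$ with $h_0'\in X_{bc}$; then $\bsigma\circ\btau_0\,.(h_0'.g_0)=\brho\,.f_0$, and regularity of the $F$-action on $Y_{ac}$ forces $\bsigma\circ\btau_0=\brho\circ(\text{the }G\text{-element }[f_0,h_0'.g_0])$-type relation, which pins down $h_0'$ in $X_{bc}$ (by uniqueness in $\CG$) and then $\bsigma$, hence $h''=h'$. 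The right-divisor statement is symmetric, using Lemma \ref{wdfn} and the right-hand analogue of the construction. I do not expect any of these last parts to present difficulty beyond careful tracking of which copy of $F$ or $G$ each element lives in.
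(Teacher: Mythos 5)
Your proposal is correct and follows essentially the same route as the paper's proof: express the discrepancy between two representations of $g$ and of $h$ as elements of $G$, use regularity of the $F$-actions and the centrality $G\leq Z(F)$ from Claim \ref{centergp}(4) to commute them away for well-definedness, take identity representatives to see the composition extends that of $\CG$, and transport existence/uniqueness of divisors from $\CG$ via the $F$-action. The only cosmetic difference is that you carry out the compatibility bookkeeping inside $\CG$ using Remark \ref{bdgp} and the homomorphism property of $\rho^F$, where the paper instead invokes the claim preceding Claim \ref{compost} (via Lemma \ref{wdfn}) that the $F$-action commutes with right composition; both are sound.
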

\begin{proof}
Let $g=\tau_1(g_1)$ and $h=\sigma_1(h_1)$ for some $\mbox{\boldmath
$\tau_1$}, \mbox{\boldmath $\sigma_1$}\in F$ and $g_1\in X_{ab},
h_1\in X_{bc}$. Then since $\sigma^{-1}_0\circ\sigma_1(h_1)=h_0$
and $\tau^{-1}_0\circ\tau_1(g_1)=g_0$, due to uniqueness we have
that both $\mbox{\boldmath $\sigma^{-1}_0$}\circ \mbox{\boldmath
$\sigma_1$}$, $\mbox{\boldmath $\tau^{-1}_0$}\circ \mbox{\boldmath
$\tau_1$}$ are in $G$ so in the center of $F$. Now due to Lemma  \ref{wdfn},
$$\begin{array}{cllll}
\sigma_0\circ\tau_0(h_0.g_0)&=
&\sigma_0\circ\tau_0\circ\sigma^{-1}_0\circ\sigma_0(h_0.g_0)&=&
\sigma_0\circ\tau_0\circ\sigma^{-1}_0(\sigma_0(h_0).g_0)\\
&=&\sigma_0\circ\tau_0\circ\sigma^{-1}_0(\sigma_1(h_1).g_0)&=&
\sigma_0\circ\tau_0\circ(\sigma^{-1}_0\circ\sigma_1)(h_1.g_0)\\
&=&\sigma_1\circ\tau_0(h_1.g_0)&=
&\sigma_1\circ\tau_1\circ(\tau^{-1}_1\circ\tau_0)(h_1.g_0)\\
&=&\sigma_1\circ\tau_1(h_1.(\tau^{-1}_1\circ\tau_0)(g_0))&=&
\sigma_1\circ\tau_1(h_1.(\tau^{-1}_1(\tau_1(g_1))))\\
&=&\sigma_1\circ\tau_1(h_1.g_1).& &
\end{array}$$
Automorphism invariance clearly follows from the same property for
$\mor(\CG)$ and the choice of the isomorphism $\rho^F_{ab}$. Moreover
by taking $\tau_0=\sigma_0=\id$, we see that the composition clearly
extends that for $\CG$. Lastly $f=\tau(f_1)$ for some $f_1\in
X_{ac}$. Now there is $h'_1\in X_{bc}$ such that $f_1=h'_1.g_1.$
Put $h'=\tau\circ\tau_1^{-1}(h'_1)$. Then by the definition,
$f=(\tau\circ  \tau^{-1}_1)\circ\tau_1(h_1'.g_1)=h'.g$. For any $h''(\ne h')\in Y_{bc}$ it easily follows that
$f\ne h''.g$. Hence $h'$ is unique such element.
\end{proof}

The rest of the construction of $\CF$ will be similar to that of $\CG$ in
\cite{GK}. $\mbox{Ob}(\CF)$ will be the same as
$\mbox{Ob}(\CG)=p(\CM)$. Now for arbitrary $c,d\models p$, an {\em
$n$-step directed path} from $c$ to $d$ is a sequence
$(c_0,g_1,c_1,g_2...,c_n)$ such that $c=c_0, d=c_n, c_{i-1}c_i\equiv
ab$ and $g_i\in Y_{c_{i-1}c_i}$. Let $D^n(c,d)$ be the set of all
$n$-step directed paths.  For $q=(c_0,g_1,c_1,g_2...,c_n)\in
D^n(c,d)$ and $r=(d_0,h_1,d_1,h_2...,d_m)\in D^m(c,d)$ we say they
are equivalent (write $r\sim s$) if for some  $c^*(\models p)\indo
qr$ and $g^*\in Y_{c^*c}$, we have $g^*_n=h^*_m\in Y_{c^*d}$ where
$g^*_0=h^*_0=g^*$ and  $g^*_{i+1}=g_{i+1}.g_i^*$ ($i=0,...,n-1$) and
$h^*_{j+1}=h_{j+1}.h_j^*$  ($j=0,...,m-1$). Due to stationarity the
 relation is independent from the choices of $c^*$ and
$g^*$, and is an equivalence relation. Similarly to Lemma
\cite[2.12]{GK}, one can easily see using Claim \ref{compost} that
for any $q\in D^n(c,d)$, there is $r\in D^2(c,d)$ such that $q\sim
r$. Then $D^2(c,d)/\sim$ will be our $\mor_{\CF}(c,d)$, and
composition will be concatenation of paths. The identity morphism in
$\mor_{\CF}(c,c)$ can be defined just like in \cite[2.15]{GK}.
Now our groupoid $\CF$ is clearly  connected, and it extends $\CG$ (see Proposition \ref{compextn}). An argument similar to that in
\cite[2.14]{GK} implies there is a canonical $ab$-invariant  1-1 correspondence between
$Y_{ab}$ and $\mor_{\CF}(a,b).$ Indeed the same argument shows that for any $c,d\models p$
(not necessarily independent), there too exists a canonical injection from  $X_{cd}$ to $\mor_{\CF}(c,d).$

Now for $f\in Y_{ab}$ (or $\in X_{cd}$, resp.), in the rest {\em we
let $\ul{f}$  denote the
corresponding element in} $\mor_{\CF}(a,b)$ (or $\mor_{\CF}(c,d)$, resp.).
Then similarly to $Y_{ab}$, it follows
$$\mor_{\CF}(a,b)=\{ x\in \dcl(\ul{f},\ov{a})|\ x\equiv_{\ov{a}} \ul{f} \mbox{ and }\dcl(x\ov{a})=\dcl(\ul{f}\ov{a})\}\subseteq \ov{ab}.$$
But $\CF$ need not be definable nor
type-definable nor hyperdefinable. It is just an invariant groupoid.
\smallskip

As pointed out in \ref{centergp},   $Y_{ab}$ is $ab$-invariant. Now
if it is type-definable then as it is a bounded union of definable
sets, by compactness it indeed is definable and a finite set. (This
happens when $T$ is $\omega$-categorical.) For this case let us add
a bit more explanations that are not explicitly mentioned in
\cite{GK}. By compactness now, $\sim$ turns out to be definable:
Note that $D^2(p):=\bigcup \{D^2(c,d)|\ c,d\models p\}$ is
$\emptyset$-type-definable. Then there clearly is an
$\emptyset$-definable equivalence relation $E$ on $D^2(p)$ each of
whose class is of the form $D^2(c,d)$. In each $E$-class, there are
exactly $|Y_{ab}|$-many $\sim$-classes. Hence $\sim$ is
$\emptyset$-definable relatively on $D^2(p)$ as well. Hence $[r]\in
\mor_{\CF}(c,d)$ is an imaginary element and the maps $[r]\mapsto c$
or $d$ (the first and last components of $r$) are
$\emptyset$-definable  $\init,\ter$ maps, respectively. Similarly the composition map of morphisms is
$\emptyset$-definable.
Therefore $\CF$ is a (relatively)
$\emptyset$-definable groupoid.
\smallskip

We return to the general context of the invariant $\CF$. For notational simplicity,
use ${\ul{Y}}_{cd}$ to denote $\mor_{\CF}(c,d)$, and use ${\ul{Y}}_{c}$ for
${\ul{Y}}_{cc}$.   We  state some observations
regarding $\CF$.
%Now for $f\in Y_{ab}$, we write $\ul{f}$ for its canonically
%corresponding element in $\Phi_{ab}$.

\begin{remark}\be\item Note  that for $\sigma\in
F_{ab}$ and $f\in Y_{ab}$, we have $\sigma(f)\in Y_{ab}$ and both
$\ul{f}, \ul{\sigma(f)}\in \ul{Y}_{ab}$. However  depending on context, $\sigma(\ul f)$ may be
 in $\ul{Y}_{ab}$, or $\ul{Y}_{a\sigma(b)}.$
For $f,g\in Y_{ab}$, clearly $f\ul{f}\equiv g\ul{g}$. Hence in this sense obviously
$\sigma(\ul f)\in \ul{Y}_{ab}.$ But since $\sigma(f)$  is in $Y_{a \sigma(b)}$ too,
we can say $\sigma(\ul{f})\in \ul{Y}_{a\sigma(b)},$ as well. To remove this confusion,
one may put a prefix   $ab$ to any $\ul{f}\in \ul{Y}_{ab}$. But instead, in the rest
we only regard  $\sigma(\ul f)\in \ul{Y}_{ab}.$

\item We know that $\ul{Y}_{ab}\subseteq \ov{ab}$.
For any $c\models p$, we too have $\ul{Y}_{bc}\subseteq \ov{bc}$: We can assume $a\indo bc$.
Now let $f\in \ul{Y}_{bc}$. Suppose that $f\not \in \ov{bc}$, and let  $\{f_i\}$ be a set of  infinitely many
conjugates of $f$ over $\ov{b}\ov{c}$. Now due to stationarity, we can then assume that
 all $f_i$'s have the same type over $\ov{c}\cup \ov{ba}$. Hence given  $x\in \ul{Y}_{ab}$, all $f_i.x\in \ul{Y}_{ac}$ have the same type over $ac$, contradicting $f_i.x\in \ov{ac}$.
\ee
\end{remark}

We get now the following results for $\CF$ similarly to those of
$\CG$.

\begin{proposition}\label{isomorphic}
The group $F_{ab}$ is isomorphic to $\ul{Y}_a$. In fact for any $\sigma\in\ul{Y}_b$, there is
$\sigma_b\in F_{ab}$  such that for any $f\in
Y_{ab}$, $\ul{\sigma_b(f)}=\sigma.\ul f$. Hence $F_{ab}=\{\sigma_b|\ \sigma\in
\ul{Y}_b\}$.
\end{proposition}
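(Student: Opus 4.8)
The statement is the exact analogue, for the invariant groupoid $\CF$, of Claim \ref{iso} in the motivational example and of the corresponding statement for $\CG$. So the plan is to mimic the proof of Claim \ref{iso}: build the isomorphism $F_{ab}\to\ul{Y}_a$ by first transporting the left-multiplication action of the vertex group at $b$, and then checking this is a well-defined group isomorphism. Concretely, I would first fix $f_{ab}\in X_{ab}\subseteq Y_{ab}$ as a base point, and recall from Claim \ref{centergp}(2) (and its $ab$-version established just above Lemma \ref{wdfn}) that $F_{ab}=\Aut(Y_{ab}/\ov a)$ acts regularly on $Y_{ab}$; combined with the canonical $ab$-invariant bijection $Y_{ab}\to\ul Y_{ab}$ (the map $f\mapsto\ul f$), this gives a regular action of $F_{ab}$ on $\ul Y_{ab}$.

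\textbf{Key steps.} (1) Using connectedness of $\CF$ and the composition map of Claim \ref{compost}, note that $\ul Y_b=\mor_{\CF}(b,b)$ acts on $\ul Y_{ab}=\mor_{\CF}(a,b)$ on the left by $\sigma\cdot\ul f$ (post-composition), and this action is also regular since $\CF$ is a groupoid (left translation by a fixed morphism is a bijection $\mor_{\CF}(a,b)\to\mor_{\CF}(a,b)$ when we identify via a fixed base point, exactly as in Claim \ref{iso}). (2) Both $F_{ab}$ and $\ul Y_b$ thus act regularly on the same set $\ul Y_{ab}$; moreover the $F_{ab}$-action fixes $\ov a$ pointwise, i.e. is ``internal'' to $\mor_{\CF}(a,b)$ over $\ov a$, while the $\ul Y_b$-action is by definable composition, and I claim the two actions commute — this is the heart of the argument. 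Granting commutation, two regular commuting actions on a nonempty set are conjugate, and more precisely one gets for each $\sigma\in\ul Y_b$ a unique $\sigma_b\in F_{ab}$ with $\sigma_b(f_{ab})$ equal to whatever element of $Y_{ab}$ corresponds to $\sigma\cdot\ul{f_{ab}}$; the map $\sigma\mapsto\sigma_b$ is then automatically a group isomorphism $\ul Y_b\to F_{ab}$, and it satisfies $\ul{\sigma_b(f)}=\sigma\cdot\ul f$ for all $f\in Y_{ab}$, not just for $f_{ab}$, because once it holds at the base point it propagates by regularity (using Claim \ref{centergp}(4): elements of $G_{ab}\le Z(F_{ab})$ intertwine everything, so changing the base point by an element of $G_{ab}$ changes nothing). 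Finally, since $F_{ab}$ and $\ul Y_a$ are abstractly isomorphic (both being isomorphic to $F_{01}$ via the extended binding group $F$, or directly by the canonical isomorphism $\eta$ between vertex groups of the connected groupoid $\CF$), we get $F_{ab}\cong\ul Y_a$ and $F_{ab}=\{\sigma_b\mid\sigma\in\ul Y_b\}$.

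\textbf{Main obstacle.} The step I expect to be genuinely delicate is verifying that the two actions on $\ul Y_{ab}$ commute, i.e. that for $\sigma\in F_{ab}$ (acting by an $\ov a$-automorphism of $Y_{ab}$) and $\tau\in\ul Y_b$ (acting by definable post-composition), one has $\sigma(\tau\cdot\ul f)=\tau\cdot\sigma(\ul f)$. In the commutative case (Claim \ref{iso}) this was "obvious" because $\sigma$ fixes $G_a$ pointwise; here the subtlety is that $F_{ab}$ need not be abelian and $\ul Y_b$ is only a vertex group, so I would argue it via Lemma \ref{wdfn} and Claim \ref{compost}: write $\tau\cdot\ul f$ through the composition formula $\sigma_0\circ\tau_0(h_0.g_0)$, observe that an $\ov a$-automorphism realizing $\sigma$ commutes with the invariant composition map (invariance is part of Claim \ref{compost}) and fixes the data $h_0,h_0.g_0\in X$-pieces that live in $\ov a$ or are moved only through $Y_{ab}$, and conclude the two actions commute. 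A secondary point to be careful about is the "$\sigma_b$" notation clash with the $G$-action convention fixed earlier — but since $G_{ab}\le F_{ab}$ and the new $\sigma_b$ restricts to the old one on $G$, this is harmless and I would remark on it rather than belabor it.
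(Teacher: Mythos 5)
Your overall strategy (transport a composition action to identify it with $F_{ab}$, using regularity) is the right family of argument, but the step you yourself single out as the heart — that the $F_{ab}$-action on $\ul{Y}_{ab}$ commutes with the \emph{left} action of $\ul{Y}_b$ — is false in general, and this is a fatal gap. Indeed, the proposition you are proving says precisely that $F_{ab}$, viewed inside the permutations of $\ul{Y}_{ab}$, \emph{is} the set of left translations $\ul f\mapsto \sigma.\ul f$ with $\sigma\in\ul{Y}_b$; two left translations commute only when the corresponding group elements do, so elementwise commutation of your two actions would force $\ul{Y}_b$ (hence $F_{ab}$) to be abelian — exactly what the paper is constructing a counterexample to (cf.\ the $T_G$ example and Claim \ref{iso}, where $\Aut(\mor(a,b)/\ov a)\cong G$ with $G$ arbitrary). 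Your proposed justification also breaks down at the expected spot: an automorphism $\hat\mu$ over $\ov a$ realizing $\mu\in F_{ab}$ does commute with the invariant composition, but it does \emph{not} fix the left multiplier $\tau\in\ul{Y}_b\subseteq\ov b$; writing $\tau=\ul g.\ul f^{-1}$ one gets $\hat\mu(\tau)=\ul{\mu(g)}.\ul{\mu(f)}^{-1}$, a conjugate of $\tau$ which differs from $\tau$ in the non-commutative case. (A smaller but related slip: propagating the identity from the base point to all of $Y_{ab}$ cannot be done with $G_{ab}\leq Z(F_{ab})$ alone, since $G_{ab}$ does not act transitively on $Y_{ab}$ when $Y_{ab}\supsetneq X_{ab}$.)

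The correct commutation — and the one the paper's proof actually uses — is with the \emph{right} action of $\ul{Y}_a$: since $\ul{Y}_a\subseteq\ov a$ is fixed pointwise by every $\mu\in F_{ab}$ and the composition of $\CF$ is invariant, $\mu(\ul f.x)=\mu(\ul f).x$ for all $x\in\ul{Y}_a$. The paper defines $\eta:\ul{Y}_b\to F_{ab}$ by $\eta(\sigma)(f)=g$ where $\ul g=\sigma.\ul f$, and its ``well-definedness'' computation $\eta(\sigma)(\ul h)=\eta(\sigma)(\ul f).x=\sigma.\ul f.x=\sigma.\ul h$ (with $\ul h=\ul f.x$) is exactly this right-commutation; injectivity and surjectivity then follow because any $\mu\in F_{ab}$ is determined by $(f_{ab},\mu(f_{ab}))$, i.e.\ from regularity of the $F_{ab}$-action on $Y_{ab}$. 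Equivalently, in your centralizer language: $F_{ab}$ centralizes the regular right $\ul{Y}_a$-action, hence lies in its centralizer, which is the set of left translations by $\ul{Y}_b$, and regularity of $F_{ab}$ on $Y_{ab}$ gives equality; the isomorphism with $\ul{Y}_a$ then comes from connectedness of $\CF$. So your proof can be repaired by swapping which side you commute with, but as written the key claim is wrong rather than merely unproved.
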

\begin{proof}
The proof will be similar to that of Claim \ref{iso}. Define a map
$\eta:\ul{Y}_b\to F_{ab}$ such that for $\sigma\in \ul{Y}_b$ and any $f\in
Y_{ab}$, we let $\eta(\sigma)(f)=g$ where $\ul g=\sigma.\ul f$. Hence due to that $f\ul{f}\equiv g\ul{g}$,
we have
$\eta(\sigma)(\ul{f})=\ul{g}$ too.
Then
$\eta$ is a well-defined, since if $h\in
Y_{ab}$, then there is $x\in \ul{Y}_a\subseteq \ov{a}$ such that $\ul{h}=\ul{f}.x$, and  thus
$\eta(\sigma)(\ul{h})=\eta(\sigma)(\ul{f}).x=\ul{g}.x=\sigma.\ul{f}.x=\sigma.\ul{h}.$

Moreover clearly $\eta$ is  1-1 and onto  since
any $\mu\in F_{ab}$ is determined by $(f_{ab},\mu(f_{ab}))$.
It is obvious   $\eta$ is in fact a group
isomorphism. Now we take $\sigma_b=\eta(\sigma)$.
\end{proof}

\begin{proposition}\label{compextn}
For $c(\models p)\indo ab$ and $f\in Y_{ab},g\in Y_{bc}, h\in
Y_{ac}$, we have $h=g.f$ (the composition map is  defined before Claim \ref{compost}) iff $\ul h=\ul{g}.\ul{f}$. Moreover, $\CF$ extends the composition of $\CG$.
\end{proposition}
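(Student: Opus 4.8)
The plan is to reduce the biconditional to a single identity. Since $\ul{\cdot}\colon Y_{ac}\to\mor_{\CF}(a,c)$ is a bijection and, for the fixed $f\in Y_{ab}$ and $g\in Y_{bc}$, both $g.f$ and $\ul{g}.\ul{f}$ are uniquely determined elements, the statement ``$h=g.f$ iff $\ul h=\ul g.\ul f$'' is equivalent to the single equation $\ul{g.f}=\ul g.\ul f$. So I would fix $f,g$ and prove that. First I recall the description of the canonical correspondence built by the argument of \cite[2.14]{GK}: for $f\in Y_{ab}$ and any $e\models p$ with $e\indo ab$, any $f_1\in X_{ae}$, and the unique $f_2\in Y_{eb}$ with $f_2.f_1=f$ (existence and uniqueness by Claim \ref{compost}, since $b\indo ae$), one has $\ul f=[(a,f_1,e,f_2,b)]$; $\sim$-invariance makes this independent of the choices.

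Next I would pick an auxiliary $e\models p$ with $\{a,b,c,e\}$ independent over $\emptyset$ (choose a long enough Morley sequence), then $f_1\in X_{ae}$ and the unique $f_2\in Y_{eb}$ with $f_2.f_1=f$, and $g_1\in X_{be}$ and the unique $g_2\in Y_{ec}$ with $g_2.g_1=g$. Then $\ul f=[(a,f_1,e,f_2,b)]$ and $\ul g=[(b,g_1,e,g_2,c)]$, so by the definition of composition in $\CF$ (concatenation of representatives followed by reduction to a $2$-step path), $\ul g.\ul f=[(a,f_1,e,f_2,b,g_1,e,g_2,c)]$. Putting $h:=g.f\in Y_{ac}$ and letting $h_2\in Y_{ec}$ be the unique element with $h_2.f_1=h$ (Claim \ref{compost}, as $c\indo ae$), we get $\ul h=[(a,f_1,e,h_2,c)]$, so it suffices to show the $4$-step path $(a,f_1,e,f_2,b,g_1,e,g_2,c)$ is $\sim$-equivalent to the $2$-step path $(a,f_1,e,h_2,c)$. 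Unravelling the definition of $\sim$ with a base point $c^*\models p$ independent from everything and a reference $g^*\in X_{c^*a}$, and using that pushing $g^*$ through a concatenation of paths is pushing it through the first and then through the second, this reduces to the single identity
$$g_2.(g_1.(f_2.(f_1.g^*)))=h_2.(f_1.g^*)\quad\text{in }Y_{c^*c}.$$

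I expect this last identity to be the main obstacle: in the merely $\emptyset$-invariant (non-type-definable) setting there is no compactness to lean on, so I would establish the required instances of associativity of the $Y$-composition by hand. Concretely, writing each relevant $Y$-element as an $F$-translate of an $X$-element (e.g.\ $f_2=\bsigma(f_2^0)$ with $f_2^0\in X_{eb}$, $\bsigma\in F$, and likewise for $g_2,h_2$), the compatibility claim $\bmu.(x.y)=(\bmu.x).y$ valid when the object adjoined is independent (the unlabelled claim whose proof invokes Lemma \ref{wdfn}) lets me pull all the $F$-automorphisms to the outside, reducing every composition in sight to $\CG$-compositions among elements of the various $X$'s; on that level associativity is already known, since $\CG$ is a groupoid. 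Running this reduction on both sides — and recalling that $g.f$ likewise unwinds, by its definition preceding Claim \ref{compost}, to an $F$-translate of a $\CG$-composition of elements of the $X$'s — one checks the two sides agree, giving $\ul{g.f}=\ul g.\ul f$ and hence the biconditional. Finally, for ``$\CF$ extends the composition of $\CG$'': on $X$-morphisms with independent objects the $Y$-composition coincides with the $\CG$-composition (take $\sigma_0=\tau_0=\id$ in the definition preceding Claim \ref{compost}), and $\ul{\cdot}$ restricts to the canonical identification $X_{cd}\cong\mor_{\CG}(c,d)$ of \cite{GK}; so the inclusion $\CG\hookrightarrow\CF$ is functorial by the case of the biconditional just proved, and the non-independent case follows from the path representations exactly as in \cite{GK}.
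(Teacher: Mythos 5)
Your proof is correct, but it follows a genuinely different route from the paper's on the main equivalence. You reduce the biconditional to $\ul{g.f}=\ul{g}.\ul{f}$, reconstruct the canonical correspondence (which the paper leaves implicit, citing \cite[2.14]{GK}) as $\ul{f}=[(a,f_1,e,f_2,b)]$ with $f_2.f_1=f$, and unwind $\sim$ to the single identity $g_2.(g_1.(f_2.(f_1.g^*)))=h_2.(f_1.g^*)$; this identity does hold, and your reduction goes through because the very definition of the composition on the $Y$-sets pulls the $F$-translates out of both factors, and the resulting products of elements of $F$ come out in the same order on the two sides, so non-commutativity of $F$ never intervenes and everything collapses to associativity in $\CG$ together with the uniqueness clause of Claim \ref{compost}. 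The paper instead observes that the composition relation of Claim \ref{compost} is $\emptyset$-invariant, hence given by an invariant relation, and then quotes the proof of \cite[2.19]{gkk}; your computation is essentially that omitted argument carried out by hand in the invariant setting, which buys self-containedness at the cost of length. Conversely, on the ``moreover'' clause the paper is the explicit one: it first treats $y\in X_{ab}$, $z\in X_{bc}$ with $ac\indo b$ (so $a,c$ possibly dependent) via an auxiliary $a'\indo abc$ and $x\in X_{a'a}$, and then arbitrary $s\in X_{ac}$, $t\in X_{cd}$ with $acd\indo b$ by factoring $s=v.u$ and $t=w.v^{-1}$ through the hub $b$; you prove only the independent-triangle case (which your biconditional already gives) and defer the dependent-objects case to ``the path representations as in \cite{GK}'', and that hub-factoring computation is exactly the piece you should write out to make the extension claim complete.
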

\begin{proof}
 Since the composition relation defined in
\ref{compost} is invariant relation, we can find an $\emptyset$-invariant relation
$\theta(x,y,z)$ such that for any $a'b'c'\equiv
abc$ and $f'\in Y_{a'b'},g'\in Y_{b'c'}, h'\in Y_{a'c'}$, we have
$h'=g'.f'$ iff $\theta(a'b'f',b'c'g',a'c'h')$ holds. Then the rest
proof of the proposition will be exactly the same as that of
\cite[2.19]{gkk}, hence we omit it.

We now step by step show that $\CF$ extends the composition of $\CG$.
 Let $c\models p$ be given such that $ac\indo b$.  Let $y\in X_{ab},z\in X_{bc}$.
  %z\in X_{ac}$ such that $z=y.x$ (in $\CG$).
Choose
$a'(\models p)\indo abc$, and $ x\in X_{a'a}.$ Then $z.y.x\in X_{a'c}$, and by the definitions of the concatenating composition and the
injection from $X_{ac}$ to $\ul{Y}_{ac}$, we have $\ul{z}.\ul{y}=\ul{z.y}$ in $\CF$.

Now more generally let $d\models p$ be given such that $acd\indo b$. Let $s\in X_{ac}$, $t\in X_{cd}$.
Choose $u\in X_{ab}$. Then there are $v\in X_{bc}$ and $w\in X_{bd}$ such that $s=v.u \in X_{bc}$, and  $w.v^{-1}=t$ (in $\CG$). Then by the previous argument, in $\CF$, we have
$ \ul{t}.\ul{s}=\ul{w.v^{-1}}.\ul{v.u}=\ul{w}.\ul{v^{-1}}.\ul{v}.\ul{u}=\ul{w}.{\ul{v}}^{-1}.\ul{v}.\ul{u}=
\ul{w.u}=\ul{t.s}.$
\end{proof}

We now give an example where $X_a$ is a proper subgroup of  $Z(\ul{Y_a})$.

\begin{example}\label{noncentergp}
Consider the same example $(O,M,.,\init,\ter)$ as in section 2, but where the binding group $G$ is abelian. Namely it is a connected finitary abelian groupoid.
We add one more sort $I$ and an equivalence relation $E$ on $I$ such that each $E$-class has $2$ elements, and there also is a projection function
$\pi_E:I\to O$  (all are in the language) so that $O=I/E$. We let $N$ be the resulting extended structure. Now choose $c\ne d\in O$, $f\in \mor(c,d)$, and let $\{c_0,c_1\}=\pi^{-1}(c)$, and $\{d_0,d_1\}=\pi^{-1}(d)$.
Now clearly $(c_0c_1c, d_0d_1d,\dots,f,\dots,.(x,yz))$ where $.(x,y,z)$ is a formula indicating the composition $z=y.x$, can be considered as  a symmetric witness. Let $\CG$ be the abelian groupoid obtained from the symmetric witness. Then clearly $X_{c_0c_1c}$ is isomorphic to $G$. Let $\CF$ be the groupoid as we constructed above from $\CG$. Then since an automorphism of $N$ can swap   $d_0,d_1$  while fixing $c_0c_1cd$, we have that $G$ is central in  $F:=\ul{Y}_{c_0c_1c}$, while $G$ has only two conjugates in $F$. Then it easily follows that $F$ is abelian too, so
$G \lneq Z(F)=F$.
\end{example}

\section{Approximation of the non-commutative groups}

In this last section we discuss a possible limit of  the vertex groups of the non-commutative groupoids we have constructed in previous section.
As before, we keep suppressing $A=\acl(A)$ to $\emptyset$.
 Fix a complete type $q$ (of possibly infinite arity) over $\emptyset$. Choose independent
 $u,v,w\models q$. Recall that
 $$\Gamma_2(q):=\Aut(\widetilde{uv}/\ov{u},\ov{v}),$$
 where $\widetilde{uv}:=\ov{uv}\cap \dcl(\ov{uw},\ov{vw})$.

The following fact is simply a restatement of Proposition \ref{full symm} and Fact \ref{gpoidfromwitness}.

 \begin{fact}\label{dirsys}
 Let a finite tuple $f\in \widetilde{uv}\smallsetminus \dcl(\ov{u},\ov{v})$ be given.
 Then there are  a generic abelian
  groupoid $\CG'$ in  $q'\in S(\emptyset)$ of  finite arity; and independent $u',v'\models q'$ with $f'\in \mor_{\CG'}(u',v')$ such that
  $f\in\dcl(f')$, and $u'\subseteq \ov{u}$, $v'\subseteq \ov{v}$.
\end{fact}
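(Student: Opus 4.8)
The plan is to deduce the statement directly from Proposition~\ref{full symm} and Fact~\ref{gpoidfromwitness}; the only genuine work is a reduction to finite tuples, after which the two cited results do everything.

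\smallskip
\emph{Reduction to finite tuples.} Recall $\widetilde{uv}=\dcl(\ov{uw}\,\ov{vw})\cap\ov{uv}$. Since $f$ is a finite tuple lying in this set, finite character of $\acl$ and of $\dcl$ produces finitely many coordinates of $u,v,w$ that already witness both memberships: there is a finite index set $I$ such that, putting $u_0:=u\restriction I$, $v_0:=v\restriction I$, $w_0:=w\restriction I$, one has $f\in\ov{u_0 v_0}$ and $f\in\dcl(\ov{u_0 w_0}\,\ov{v_0 w_0})$, hence $f\in\widetilde{u_0 v_0}$. Also $f\notin\dcl(\ov{u_0},\ov{v_0})$, since $\dcl(\ov{u_0},\ov{v_0})\subseteq\dcl(\ov u,\ov v)$ and $f\notin\dcl(\ov u,\ov v)$ by hypothesis. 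Writing $q_0:=q\restriction I$, the truncations $u_0,v_0,w_0$ realize $q_0$, and by monotonicity of nonforking applied to the $\emptyset$-independence of $\{u,v,w\}$ the triple $(u_0,v_0,w_0)$ is $\emptyset$-independent; since $q_0$ is stationary, $(u_0,v_0,w_0)$ is the beginning of a Morley sequence of finite tuples over $\emptyset$.

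\smallskip
\emph{Applying the cited results.} Apply Proposition~\ref{full symm} to $(u_0,v_0,w_0)$ and the finite tuple $f\in\widetilde{u_0 v_0}\smallsetminus\dcl(\ov{u_0},\ov{v_0})$: this yields a full symmetric witness $W'=(a_0,a_1,a_2,\hat f,g,h,\theta)$ over $\emptyset$ with $f\in\dcl(\hat f)$ and $u_0\in\dcl(a_0)\subseteq\ov{u_0}$, $v_0\in\dcl(a_1)\subseteq\ov{v_0}$ (and similarly for $w_0$); in particular $a_0\subseteq\ov{u_0}\subseteq\ov u$ and $a_1\subseteq\ov{v_0}\subseteq\ov v$. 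Now set $q':=\tp(a_0)\in S(\emptyset)$, which has finite arity since $a_0$ is a finite tuple, and apply Fact~\ref{gpoidfromwitness} to $W'$: it produces an $\emptyset$-type-definable generic abelian groupoid $\CG'$ in $q'$ together with an $a_0 a_1$-definable bijection $\pi_{01}$ from the solution set of $\tp(\hat f/a_0 a_1)$ onto $\mor_{\CG'}(a_0,a_1)$. Put $u':=a_0$, $v':=a_1$ and $f':=\pi_{01}(\hat f)\in\mor_{\CG'}(u',v')$. Then $u',v'\models q'$ (as $a_0\equiv a_1$ by clause (2) of a symmetric witness) are $\emptyset$-independent (as $\{a_0,a_1,a_2\}$ is), and $u'\subseteq\ov u$, $v'\subseteq\ov v$.

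\smallskip
\emph{Checking $f\in\dcl(f')$ and the main obstacle.} Since $\init$ and $\ter$ are $\emptyset$-definable maps of $\CG'$ with $\init(f')=a_0$ and $\ter(f')=a_1$, we get $a_0,a_1\in\dcl(f')$; as $\pi_{01}^{-1}$ is $a_0 a_1$-definable this gives $\hat f=\pi_{01}^{-1}(f')\in\dcl(f'a_0 a_1)=\dcl(f')$, hence $f\in\dcl(\hat f)\subseteq\dcl(f')$, as required. The only point that is more than bookkeeping is the finite reduction in the first step: one must choose the finite index set $I$ large enough to swallow the (finitely many) coordinates witnessing each $\acl$/$\dcl$ containment while leaving $u_0,v_0,w_0$ in a common finitary type $q_0$, after which stationarity upgrades plain $\emptyset$-independence to ``the beginning of a Morley sequence of finite tuples,'' which is exactly the hypothesis of Proposition~\ref{full symm}.
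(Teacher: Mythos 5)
Your proposal is correct and matches the paper's intent: the paper treats this Fact as ``simply a restatement'' of Proposition~\ref{full symm} and Fact~\ref{gpoidfromwitness}, and your argument is exactly that application, with the finite-character reduction to a finitary type $q_0$ (which the paper leaves implicit) and the bookkeeping for $f\in\dcl(f')$ spelled out carefully.
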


We let

$\begin{array}{ll}
I=I_q:=\{ f & \in \widetilde{u_f, v_f}:\ \CG_f  \mbox{ is a generic  abelian groupoid in }
\tp(u_f)=\tp(v_f) \\
& \mbox{ such that }
f\in \mor_{\CG_f}(u_f, v_f)\mbox{ with independent finite }\\
& \mbox{tuples } u_f(\subseteq \ov{u}), v_f(\subseteq \ov{v})\}.
\end{array}$

\medskip

\noindent On the other hand we let,

$\begin{array}{ll}
J=J_q:=\{ f  & \in \widetilde{u_f, v_f}:\ \CF_f  \mbox{ is a non-commutative groupoid obtained from  }\\
& \mbox{the generic abelian groupoid } \CG_f \mbox{ in } \tp(u_f)=\tp(v_f)  \mbox{ such that }\\
& f\in \mor_{\CF_f}(u_f, v_f)\mbox{ with independent finite tuples }
 u_f(\subseteq \ov{u}), v_f(\subseteq \ov{v})\}.
\end{array}$

\medskip
\noindent
For $f\in I_q$,  we write
 $G_f$  to denote   $G_{u_fv_f}$ as in section 3.
Now by Fact \ref{dirsys},   $(I,\leq_I)$ with letting $f\leq_I f'$ iff  $f\in \dcl(f')$,
 $\init(f)\in\dcl(\init(f'))$, and $\ter(f)\in\dcl(\ter(f'))$ is a direct system.

Now for $f\leq_I f'\in I$,  any $\sigma'\in G_{f'}$
 fixes $u_{f'}v_{f'}$ pointwise.
 Hence $\sigma'(f)\in X_{u_fv_f}$, and
we write $(\sigma' \restriction f)$ to denote the unique $\sigma\in G_{f}$ such that
$\sigma (f)=\sigma'(f)$; and  $\chi^{f'}_{f}:G_{f'}\to G_f$ to denote
 the group
homomorphism  sending $\sigma'$ to
$(\sigma' \restriction f)$.
Due to stationarity it indeed is an epimorphism.
 Clearly $\chi^f_f=\id$. Then
 $$\CS_{I}:=(\{G_f|\ f\in I\}, \{\chi^{f'}_{f} |\ f\leq_I f'\in I\})$$ forms a directed system of finite abelian groups. As pointed out in \cite[Theorem 2.25]{gkk}, the inverse limit of $\CS_{I}$ is isomorphic to
  $\Gamma_2(q)$, so it is a  profinite abelian group.

However for $\{F_f| \ f\in J\}$ where $F_f:=F_{u_fv_f}$ as in section 3, it is not clear how to   give an order relation and  transition maps to make this a directed system of groups.
There are a couple of obstacles to do this. For example, in general given an elementary map $\sigma$ of $\CM$, and a tuple $cd$, even if
$cd$ and $\sigma(cd)$ are interdefinable, $c$ and $\sigma(c)$ need not be so, and vice versa. But we can consider {\em partial} transition maps among $F_f$'s and their limit as follows. We let
$$\Pi_2(q):=\{ \sigma \in \aut(\widetilde{uv}/\ov{u}): \mbox{ for any } f\in J, \ \dcl(f\ov{u})=\dcl(\sigma(f)\ov{u})\}.$$
For $f\in J$ and $\sigma \in \Pi_2(q)$, we similarly write
$(\sigma\restriction f)$ to denote the unique $\sigma'\in F_f$ such that $\sigma(f)=\sigma'(f)$.
We let $$\Pi_f:=\{(\sigma\restriction f)| \ \sigma\in \Pi_2(q)\}.$$

\begin{proposition}  The following hold.
\be
\item
$\Pi_2(q):=\{ \sigma \in \aut(\widetilde{uv}/\ov{u}): \mbox{ for any } f\in I, \ \dcl(f\ov{u})=\dcl(\sigma(f)\ov{u})\}$.
\item
%for any $\sigma \in \Pi_2(q)$, and $f\in J$, $\sigma\restriction f$ (defined as ....) belongs to $F_f$ and
For $f\in J$, we have
 $G_f\leq \Pi_f\leq F_f$. Now
$$\CS_{J}:=(\{\Pi_f|\ f\in J\}, \{\chi^{f'}_{f} |\ f\leq_J f'\in J\})$$ forms a directed system of groups, where $\leq_J$ and $\chi^{f'}_f$ are similarly defined as in $\CS_I$. Moreover $\Pi_2(q)$ is the inverse limit of $\CS_J$.

\item
$\Gamma_2(q)\leq Z(\Pi_2(q))$.
\item Both $\Gamma_2(q)$ and 
$\Pi_2(q)$ are  normal subgroups of $\aut(\widetilde{uv}/\ov{u})$.
\ee
\end{proposition}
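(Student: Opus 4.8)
The plan is to realize both $\Gamma_2(q)=\aut(\widetilde{uv}/\ov{u}\ \ov{v})$ and $\Pi_2(q)$ as subgroups of the ambient group $\aut(\widetilde{uv}/\ov{u})$ and to prove (1) first, so that the $I$- and $J$-descriptions of $\Pi_2(q)$ become interchangeable. For (1): since $X_{cd}\subseteq Y_{cd}$ and $Y_{cd}$ injects canonically (and interdefinably) into $\mor_{\CF}(c,d)$ (Claim \ref{centergp}, Proposition \ref{compextn}), one has $I\subseteq J$, so the $J$-condition on $\sigma$ implies the $I$-condition; conversely, given $f\in J$, say $f\in\mor_{\CF_f}(u_f,v_f)$ coming from some $g_0\in X_{u_fv_f}\subseteq I$, the definition of $Y_{u_fv_f}$ gives $\dcl(f\ov{u_f})=\dcl(g_0\ov{u_f})$, hence $\dcl(f\ov u)=\dcl(g_0\ov u)$; applying $\sigma$ (which fixes $\ov u\supseteq\ov{u_f}$ pointwise) and then the $I$-condition at $g_0$ yields $\dcl(\sigma(f)\ov u)=\dcl(\sigma(g_0)\ov u)=\dcl(g_0\ov u)=\dcl(f\ov u)$. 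The very same computation, run with an arbitrary conjugate of $\sigma$, will reappear in (4). I would also record here that $\Gamma_2(q)\leq\Pi_2(q)$: anything fixing $\ov{u}\ \ov{v}$ fixes each $\ov{u_f}\ \ov{v_f}$, hence carries $X_{u_fv_f}$ into itself, hence lands inside $Y_{u_fv_f}\subseteq\dcl(f\ov{u_f})$, which is exactly the $I$-condition.

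For (2): given $\sigma\in\Pi_2(q)$ and $f\in J$ I would first verify $\sigma(f)\in Y_{u_fv_f}$ — $\sigma$ fixes $\ov{u_f}$, and the interdefinability condition (together with the description $Y_{u_fv_f}=\mor_{\CF_f}(u_f,v_f)$ and Lemma \ref{uniformaction}) should pin $\sigma(f)$ back inside the morphism set — and then, using regularity of the $F_f$-action on $Y_{u_fv_f}$ (Proposition \ref{isomorphic}, Claim \ref{centergp}(2)), set $(\sigma\restriction f)$ to be the unique element of $F_f$ with $(\sigma\restriction f)(f)=\sigma(f)$. One then checks that $\sigma\mapsto(\sigma\restriction f)$ is a group homomorphism, so $\Pi_f$ is a subgroup of $F_f$; and $G_f\leq\Pi_f$ holds because $\Gamma_2(q)\leq\Pi_2(q)$ already restricts onto $G_f$ (the maps $\chi^{f'}_f$ in $\CS_I$ are epimorphisms and $\Gamma_2(q)=\varprojlim\CS_I$, by \cite[Theorem 2.25]{gkk}). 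Defining $\leq_J$ and $\chi^{f'}_f$ on $J$ verbatim as for $\CS_I$, well-definedness of $\chi^{f'}_f$ uses $f\in\dcl(f')$ (so $\sigma_1(f')=\sigma_2(f')$ forces $\sigma_1(f)=\sigma_2(f)$) and directedness of $(J,\leq_J)$ is Fact \ref{dirsys}; the restrictions are compatible with the $\chi^{f'}_f$, inducing $\Pi_2(q)\to\varprojlim\CS_J$, which is injective because (Fact \ref{dirsys}) every point of $\widetilde{uv}$ moved by $\sigma$ lies in some $\dcl(f')$ with $f'\in I\subseteq J$, and surjective by gluing a compatible family — each component of which is by construction of $\Pi_f$ realized by some element of $\Pi_2(q)$ — via the stationarity/compactness argument of \cite[Theorem 2.25]{gkk}.

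Part (3) is then short: for $\nu\in\Gamma_2(q)\leq\Pi_2(q)$, $\sigma\in\Pi_2(q)$ and any $f\in J$, the restriction $(\nu\restriction f)$ actually lies in $G_f$ (since $\nu$ also fixes $\ov{v_f}$), and $G_f\leq Z(F_f)$ by Claim \ref{centergp}(4), so $(\nu\restriction f)$ and $(\sigma\restriction f)\in F_f$ commute; as restriction is a homomorphism and the family $((\cdot\restriction f))_{f\in J}$ is faithful, $\nu$ and $\sigma$ commute. For (4), $\Gamma_2(q)$ is normal in $\Pi_2(q)$ by (3); for normality inside $\aut(\widetilde{uv}/\ov u)$ I would use the $\aut(\widetilde{uv}/\ov u)$-invariance of $I$ (each such $\alpha$ permutes $I$, carrying $\CG_f$ to $\alpha(\CG_f)$, again a generic abelian groupoid with $\init,\ter$ inside $\ov u,\ov v$) together with the description from (1): for $\alpha$ in the big group, $\sigma\in\Pi_2(q)$, $f\in I$, set $g=\alpha^{-1}(f)\in I$; then $\dcl((\alpha\sigma\alpha^{-1})(f)\ov u)=\alpha(\dcl(\sigma(g)\ov u))=\alpha(\dcl(g\ov u))=\dcl(f\ov u)$, so $\alpha\sigma\alpha^{-1}\in\Pi_2(q)$; the parallel argument with the characterization $\Gamma_2(q)=\{\sigma\in\aut(\widetilde{uv}/\ov u):\sigma(f)\in X_{u_fv_f}\text{ for all }f\in I\}$ handles $\Gamma_2(q)$.

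The step I expect to be hardest is the well-definedness and homomorphism property in (2): because the $F_f$-action on the possibly infinite set $Y_{u_fv_f}$ is not type-preserving — the phenomenon flagged just before ``For the rest of the paper\dots'' and witnessed by Example \ref{noncentergp} — the clean $\CS_I$-style argument does not transfer, and one must exploit the \emph{global} interdefinability condition defining $\Pi_2(q)$ (not merely its value at a single $f$), together with the limited uniformity supplied by Lemma \ref{uniformaction} and Claim \ref{centergp}(4),(5), in order to force $\sigma(f)\in Y_{u_fv_f}$, to obtain $((\sigma_1\sigma_2)\restriction f)=(\sigma_1\restriction f)\circ(\sigma_2\restriction f)$, and to complete the inverse-limit identification. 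A secondary technical point is establishing the $\aut(\widetilde{uv}/\ov u)$-invariance of $I$ and the compatible characterization of $\Gamma_2(q)$ used in (4).
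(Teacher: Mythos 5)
Your overall architecture matches the paper's: (1) is handled by the same interdefinability-over-$\ov{u_f}$ observation the paper treats as clear; in (2) you set up the same restriction maps $(\sigma\restriction f)$, the same directed system, and the same map into the inverse limit (injectivity via Fact~\ref{dirsys}, surjectivity by compactness/gluing, exactly as in the paper's appeal to \cite[Theorem 2.25]{gkk}); (3) is the paper's argument verbatim ($G_f$ central, restrictions faithful); and for the normality of $\Pi_2(q)$ in (4) your computation with $g=\alpha^{-1}(f)\in I$ is literally the paper's, including the same unremarked assumption that $\aut(\widetilde{uv}/\ov{u})$ permutes $I$. The well-definedness of $(\sigma\restriction f)$ that you flag as the hardest step (i.e.\ that $\sigma(f)$ lands back in $\mor_{\CF_f}(u_f,v_f)$, which needs interdefinability over $\ov{u_f}$ rather than over $\ov{u}$) is presupposed by the paper in the definitions preceding the proposition rather than proved there, so leaving it as a flagged verification does not put you behind the paper's own level of detail.

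The one place you genuinely depart is the normality of $\Gamma_2(q)$ in (4), and there your argument has a gap. You route it through the characterization $\Gamma_2(q)=\{\sigma\in\aut(\widetilde{uv}/\ov{u}):\sigma(f)\in X_{u_fv_f}\mbox{ for all }f\in I\}$, but only the easy inclusion $\subseteq$ is immediate; the inclusion you actually need to conclude $\alpha\sigma\alpha^{-1}\in\Gamma_2(q)$ is the converse, namely that the displayed condition forces $\sigma$ to fix $\ov{v}$ pointwise, and this is precisely the nontrivial content you are trying to establish (fixing each $f\in I$ setwise-in-$X_{u_fv_f}$ only gives that $\sigma$ fixes the finite tuples $v_f$, not all of $\ov{v}=\acl(vA)$). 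It can be repaired --- e.g.\ by applying Proposition~\ref{full symm} to augmented finite tuples so that any prescribed element of $\ov{v}$ becomes a coordinate of (hence definable over) some $v_f$ --- but as written the sub-argument is circular in the sense that the unproved direction of your characterization is equivalent to the statement being proved. The paper instead disposes of this case directly, by observing that any conjugate of $\sigma\in\Gamma_2(q)$ by an element of $\aut(\widetilde{uv}/\ov{u})$ again fixes $\ov{v}$ pointwise; you should either prove your characterization (via the augmentation just described) or switch to that direct argument.
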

 \begin{proof}
 (1) Clear (see Claim \ref{centergp}(4)).

 (2) That $\Pi_f\leq F_f$ is clear by definition, and that  $G_f\leq \Pi_f$ is also clear since
 $\Gamma_2(q)\leq \Pi_2(q)$. Now since every  automorphism in $\Pi_f$ is the restriction of that in $\Pi_2(q)$, it follows that $\CS_J$ forms a directed system of groups with the transition maps $\chi^{f'}_f$. The rest proof of (2) is standard. Let $\Pi$ be the inverse limit of $\CS_J$.
We define a homomorphism $\varphi:\Pi_2(q)\to \Pi$ by sending $\sigma \in \Pi_2(q)$ to the element in $\Pi$
  represented by the function $f\in J \mapsto (\sigma\restriction f) \in \Pi_f$. This embedding is obviously one-to-one and due to compactness it is surjective too.

(3) comes from (2) and that $G_f\leq Z(\Pi_f)$.

(4) Let $\sigma\in \Gamma_2(q)$. Then any of its conjugates in $ \aut(\widetilde{uv}/\ov{u})$ fixes $\ov{v}$ pointwise. Hence 
$\Gamma_2(q)\trianglelefteq \aut(\widetilde{uv}/\ov{u})$.

Now let $\sigma \in \Pi_2(q)$ and $\mu\in \aut(\widetilde{uv}/\ov{u})$. Then for any  $f\in I$, we have  $g:=\mu(f)\in I$ too, and $g$ and $\sigma(g)$ are interdefinable over $\ov{u}$.
Hence so are  $f$ and $\mu^{-1}\circ \sigma (g)=\mu^{-1}\circ \sigma \circ \mu(f)$ over $\ov{u}$.
Therefore $\mu^{-1}\circ \sigma \circ \mu \in \Pi_2(q)$ and (4) is proved.
 \end{proof}


\begin{thebibliography}{6}

%\bibitem{DKY}
%Tristram de Piro, Byunghan Kim, and Jessica Millar.
%\newblock Constructing the type-definable group from the group configuration.
%\newblock {\em J. Math. Log.}, {\bf 6} (2006), 121--139.

%\bibitem{B}
%Glen E. Bredon.
%\newblock {\em Topology and Geometry.}
%\newblock Springer-Verlag, NewYork,  1993.

%\bibitem{EO}  Clifton Ealy and Alf Onshuus .
%\newblock Characterizing rosy theories.
%\newblock {\em Journal of  Symbolic
% Logic},  {\bf 72} (2007), 919--940.



\bibitem{GK}
John Goodrick and Alexei Kolesnikov.
\newblock Groupoids, covers, and $3$-uniqueness in stable theories.
\newblock {\it Journal of  Symbolic
 Logic}, {\bf  75} (2010), 905--929.


 \bibitem{gkk}
John Goodrick, Byunghan Kim, and Alexei Kolesnikov.
 \newblock Amalgamation functors and boundary properties in simple theories.
\newblock {\em Israel Journal of Mathematics}, {\bf 193} (2013) 169--207.

 \bibitem{GKK}
John Goodrick, Byunghan Kim, and Alexei Kolesnikov.
\newblock Homology groups of types in model theory and the computation of
$H_2(p)$.
\newblock {\em Journal of  Symbolic
 Logic},  {\bf 78} (2013), 1086--1114.

%\bibitem{GKK1}
%John Goodrick, Byunghan Kim, and Alexei Kolesnikov.
%\newblock Type-amalgamation properties and polygroupoids in stable theories.
%\newblock Submitted.

\bibitem{GKK1}
John Goodrick, Byunghan Kim, and Alexei Kolesnikov.
\newblock Homology groups of types in stable theories and the Hurewicz correspondence.
\newblock Submitted.



\bibitem{Hr}
Ehud Hrushovski.
\newblock Groupoids, imaginaries and internal covers.
\newblock {\em Turkish Journal of Mathematics} {\bf 36} (2012), 173--198.

%\bibitem{KK}
%Byunghan Kim and Hyeung-Joon Kim.
%\newblock Notions around  tree property 1.
%\newblock To appear in {\em Annals of Pure and  Applied Logic}.

%\bibitem{KKT}
%Byunghan Kim, Alexei Kolesnikov, and Akito Tsuboi.
%\newblock Generalized amalgamation and $n$-simplicity.
%\newblock {\em Annals of Pure and  Applied Logic}, {\bf 155} (2008), 97--114.




%\bibitem{Sh:c}
%Saharon Shelah.
%\newblock {\bf Classification theory and the Number of Non-isomorphic
%Models}.
%\newblock Rev. Ed., North-Holland,  1990.
%
%\bibitem{Sh87ab}
%Saharon Shelah.
%\newblock Classification theory for non-elementary classes I: the number
%of uncountable models of $\psi \in L_{\omega_1,\omega}$. Parts A, B.
%\newblock {\em Isr. J. Math.}, {\bf 46}, 212--273, 1983.
%





\end{thebibliography}
\end{document}